\author{Marcello Longo\footnote{Seminar for Applied Mathematics, ETH Z\"urich} , 
	Christoph Schwab\addtocounter{footnote}{0}\footnotemark[\value{footnote}] , 
	Andreas Stein\addtocounter{footnote}{0}\footnotemark[\value{footnote}]  \footnote{Corresponding author. Email: andreas.stein@sam.math.ethz.ch}
	\footnote{
		A.S. is partly funded by the ETH Foundation of Data Science iniative (ETH-FDS), and it is gratefully acknowledged.}
	}
\title{A-posteriori QMC-FEM error estimation \\ for Bayesian inversion and optimal control \\ with entropic risk measure}
\newcommand{\dbtilde}[1]{\tilde{\raisebox{0pt}[0.85\height]{$\tilde{#1}$}}}
\newcommand*{\eps}{\varepsilon}
\theoremstyle{plain}
  \newtheorem{theorem}{Theorem}
  \newtheorem{proposition}{Proposition}
  \newtheorem{lemma}{Lemma}
  \newtheorem{corollary}{Corollary}
\theoremstyle{definition}
  \newtheorem{definition}{Definition}
\theoremstyle{remark}
  \newtheorem{remark}{Remark}
\begin{document}
    \maketitle
    \begin{abstract}
        We propose a novel a-posteriori error estimation technique
        where the target quantities of interest are ratios of high-dimensional integrals,
        as occur e.g. in PDE constrained Bayesian inversion and PDE constrained 
        optimal control subject to an entropic risk measure. 
        We consider in particular parametric, elliptic PDEs
        with affine-parametric diffusion coefficient, on high-dimensional parameter spaces.
        We combine our recent a-posteriori Quasi-Monte Carlo (QMC) error analysis,
        with Finite Element a-posteriori error estimation.
        The proposed approach yields a computable a-posteriori estimator 
        which is reliable, up to higher order terms.
        The estimator's reliability is uniform with respect to the PDE discretization,
        and robust with respect to the parametric dimension of the uncertain PDE input.
\begin{center}
Dedicated to Ian H. Sloan on his 85th anniversary
\end{center}
    \end{abstract}
    \section{Introduction}
\label{LoScSt:sec:Intro}
    The efficient numerical approximation of high-dimensional,
    parametric partial differential equations (PDEs for short)
    received increasing attention during the past years.
    In this work, we address two classes of high-dimensional numerical integration problems which
    arise in connection with data assimilation and PDE constrained optimization.
    We illustrate the abstract concepts for a parametric, linear elliptic PDE.
    The first class is the so-called \emph{Bayesian inverse problem} (BIP).
    There, we are interested in the posterior expectation of a (linear)
    functional of the solution $ u $ of a parametric PDE, conditional on observation data
    subject to additive, centered Gaussian observation noise \cite{LoScSt:Stu10,LoScSt:SS14}. 
    See also \cite{LoScSt:HKS21}.
    A second problem class is PDE-constrained optimization.
    Specifically,
    the optimal control problem (OCP) of parametric PDEs under an \emph{entropic risk} measure \cite{LoScSt:FK11},
    where the state variable satisfies a parametric PDE constraint \cite{LoScSt:GKKSS22}. 

    When numerically approximating solutions of a BIP or an OCP,
    it is essential to quantify the error due to the numerical discretization
    in order to meet a prescribed numerical tolerance without wasting computational resources.
    As solving PDEs exactly is in general not possible,
    discretizations such as Finite Element Methods (FEM) must be used instead.
    Additionally, the parametric uncertainty in the forward PDE model
    is passed on to the solution, 
    and this must be taken into account both in the computation and in the error estimation.
    This justifies the need for an \emph{a-posteriori} error analysis.
We introduce a novel class of QMC-FEM algorithms for BIPs and OPCs with a-posteriori error control.
We derive numerical a-posterior error estimates for ratio estimators with respect to both, 
the spatial and parametric domain errors. 
This allows efficient computation of quantities of interest for any prescribed tolerance, 
and with error bounds in the parametric domain that are uniform with respect to the parameter dimension $s$.
In particular, we assume that the uncertain PDE coefficients are affine-parametric with parameter vector
    $ \bsy \in U := \left[-\frac{1}{2},\frac{1}{2} \right]^s $ 
    where 
    $ s\in \NN $, $ s\gg 1 $, e.g.\ \eqref{LoScSt:eq:Param} below.
    We can employ suitable quasi-Monte Carlo (QMC) rules to approximate integrals over $ U $.
    Here, we select extrapolated polynomial lattice (EPL) rules as 
    first introduced in \cite{LoScSt:DGY19,LoScSt:DLS22}.
    This choice is motivated by the deterministic nature of their quadrature nodes $ P_m $, $ |P_m| = b^m $
    for some prime $ b $ \cite{LoScSt:N92},
    and good convergence properties under quantified parametric regularity
    of the integrand functions with respect to $ \bsy \in U $,
    uniformly in the dimension $ s $ \cite{LoScSt:DGY19}.
    Moreover, it was shown in \cite{LoScSt:DLS22,LoScSt:ML22DD}
    that under assumptions, EPL quadratures allow for computable a-posteriori quadrature error estimators 
    that are asymptotically exact as $ m\to \infty $,
    with dimension robust ratio between estimated and actual quadrature error.
	
    Both, BIPs and OCPs for PDEs with parametric input take the form
    \begin{equation}\label{LoScSt:ratios}
        \frac{Z'}{Z} \in \calY, \quad \text{ where } Z = \int_{U} \Theta(\bsy) \rd \bsy, \quad Z' = \int_{U} \Theta'(\bsy) \rd \bsy,
    \end{equation}
    for some suitable integrable functions
    $ \Theta\colon U \to \RR $ and $ \Theta' \colon U\to \calY $,
    where $ \calY $ is a separable Hilbert space and $ Z, Z' $ are Bochner integrals 
    with respect to a product measure $\rd\bsy$ on the possibly 
    high-dimensional parameter space $U$.
    In particular,
    we have $ \calY = \RR $ for the BIP case and $ \calY \in L^2(D) $
    for the OCP case, with $ D $ being the physical domain of the considered PDE.
    Approximating the high-dimensional integrals with averages over polynomial lattices $ P_m \subset U$
    yields a first approximation
    \begin{equation}\label{LoScSt:ratiosqmc}
        \frac{Z_m'}{Z_m} \in \calY, \quad \text{ where } Z_m 
         = \frac{1}{b^m} \sum_{\bsy \in P_m} \Theta(\bsy), 
         \quad 
         Z_m' = \frac{1}{b^m} \sum_{\bsy \in P_m} \Theta'(\bsy).
    \end{equation}
    Then, since the integrands $ \Theta, \Theta' $ depend on the solution
    of a $\bsy$-parametric PDE, 
    we discretize the parametric PDEs for $\bsy\in P_m$,
    resulting in computable, parametric integrand functions 
    $ \Theta_h(\bsy), \Theta_h'(\bsy) $
    and in the computable estimates
    \begin{equation}\label{LoScSt:ratiosqmcfem}
    \frac{Z_{m,h}'}{Z_{m,h}} \in \calY, \quad \text{ where }
    Z_{m,h} = \frac{1}{b^m} \sum_{\bsy \in P_m} \Theta_h(\bsy), \quad
    Z_{m,h}' = \frac{1}{b^m} \sum_{\bsy \in P_m} \Theta'_h(\bsy).
    \end{equation}
    Here, the parameter $ h > 0 $ denotes the meshwidth of conforming
    Lagrangian Finite Element discretizations.
    We present a
    \emph{computable a-posteriori estimator for the combined Finite Element discretization and quadrature error}
    \begin{equation}\label{LoScSt:eq:err}
        \text{err}  = \norm{\frac{Z'}{Z} - \frac{Z'_{m,h}}{Z_{m,h}}}_{\calY}.
    \end{equation}
    In the rest of this section we introduce the setting and
    we describe the two problems of interest, namely the BIP and the OCP with entropic risk measure.
    Section \ref{LoScSt:sec:QMCapost} and Section \ref{LoScSt:sec:GalDis} are devoted to the QMC and the FEM a-posteriori error analysis,
    respectively, and these results will be combined 
    in Section~\ref{LoScSt:sec:combined}. 
    We present numerical experiments 
    in Section~\ref{LoScSt:sec:numerics}
    and summary and conclusions 
    in Section~\ref{LoScSt:sec:conclusion}.
    \subsection{Affine-Parametric Forward PDE}
    \label{LoScSt:sec:FwdPbm}
    For brevity of presentation, we consider a model, linear elliptic PDE with
    homogeneous Dirichlet boundary conditions.
    The corresponding QMC-FEM error analysis for this (forward) model with EPL rules has been obtained 
    in \cite{LoScSt:DLS22, LoScSt:ML22DD}, and, more generally, for parametric holomorphic operators in \cite{LoScSt:DLS16, LoScSt:DLS22}. 
    The results therein are the foundation for our a-posteriori error analysis
    of the QMC ratio estimators in Subsections~\ref{LoScSt:sec:BIP} and~\ref{LoScSt:sec:OCEntrRisk}.
    
    Given a bounded polygon $ D \subseteq \RR^2 $
    and a parameter sequence $ \bsy \in U $, $ s\in \NN $,
    consider the following parametric, linear, second order elliptic PDE in variational form:
    find $ u(\cdot,\bsy) \in \calX = H_0^1(D) $ such that
    \begin{equation}
        \label{LoScSt:eq:PDE}
        \int_D a(x,\bsy) \nabla_x u(x,\bsy) \cdot \nabla_x v(x) \rd x = \int_D f(x) v(x) \rd x \qquad \forall v\in \calX.
    \end{equation}
    We assume that $ a $ is affine-parametric, namely that we are given a family of functions
    $ \{\psi_j\}_{j\in \NN_0} \subseteq L^{\infty}(D)$
    such that, with $ \operatorname{essinf} \psi_0 > \kappa > 0 $ and $ b_j := \frac{1}{\kappa}\norm{\psi_j}_{L^{\infty}(D)} $,
    we have
    \begin{equation}
        \label{LoScSt:eq:Param}
        a(x,\bsy) = \psi_0(x) + \sum_{j = 1}^{s} y_j\psi_j, \qquad \sum_{j\ge 1}b_j < 2.
    \end{equation}
    Then $ \operatorname{essinf} a(\cdot,\bsy) > \operatorname{essinf} \psi_0  - \kappa > 0 $
    for all $ \bsy\in U $.
    By the Lax-Milgram lemma,  the parametric weak solution $ u(\cdot,\bsy) \in \calX $
    is well defined for any $ f \in \calX^* = H^{-1}(D) $, where $ {}^* $ denotes the topological dual.
    To justify the a-posteriori QMC error analysis of Section \ref{LoScSt:sec:QMCapost},
    we will additionally require the summability
    \begin{equation}\label{LoScSt:eq:ellp}
         \bsb = (b_j)_{j\ge 1} \in \ell^p(\NN) \quad  \text{ for some } p \in (0,1/2).
    \end{equation}
    The condition $p \in (0,1/2)$ is imposed to derive the second part of Theorem~\ref{LoScSt:thm:apostqmc} below (using \cite[Theorem 4.1]{LoScSt:DLS22}), i.e. to bound the difference of two successive QMC estimators.
    
    For the FEM approximation, we consider conforming subspaces%
    \footnote{In practice, $ h $ either parametrizes the local mesh-size $ \max_{T\in \calT_h} |T|^{1/2} $,
    for quasi-uniform collections of partitions,
    or it relates to the refinement level in case of adaptive refinement \cite{LoScSt:NSV09}.}
    $ \calX_{h}\subseteq \calX $ $ h \in H \subseteq (0,\infty) $, $ \dim(\calX_{h}) < \infty $,
    that are linked to shape-regular, simplicial partitions $ \{\calT_h\}_{h\in H} $ of $ D $
    \cite[Section 8]{LoScSt:EG21BB}.
    Assume that the resulting spaces are nested and conforming,
    that is $ \calX_{h} \subseteq \calX_{h'} $ for any $ h,h'\in H, h>h' $ and that $ H $ accumulates at $ 0 $.
    We construct the Galerkin discretizations $ u_h(\bsy) \in \calX_h $ of \eqref{LoScSt:eq:PDE},
    by solving
    \begin{equation}
        \label{LoScSt:eq:PDE2}
        \int_D a(x,\bsy) \nabla_x u_h(x,\bsy) \cdot \nabla_x v(x) \rd x = \int_D f(x) v(x) \rd x \qquad \forall v\in \calX_h.
    \end{equation}
    To simplify notation, we write
    $ a(\bsy) = a(\cdot,\bsy) $ and $ u(\bsy) = u(\cdot,\bsy) $
    and we omit the variable $ x $ for $ \nabla_x = \nabla $ 
    and $ \operatorname*{div}_x = \operatorname*{div} $.
    We remark that the assumptions on $a$ given in~\eqref{LoScSt:eq:Param} imply 
    $a(\bsy)\in L^{\infty}(D)$ for all $\bsy\in U$, but not necessarily $a(\bsy)\in W^{1,\infty}(D)$. 
    The latter is in turn necessary to control the (a-posteriori) FEM approximation error, hence the condition
    $a(\bsy)\in W^{1,\infty}(D)$ will be imposed for the BIP and OCP in Section \ref{LoScSt:sec:GalDis} below.
    \subsection{Bayesian inverse problem (BIP)}
    \label{LoScSt:sec:BIP}
    Let $ X = \{a \in L^{\infty}(D) : \ \operatorname{essinf} a > 0 \}$
    and fix $ f \in \calX^* $.
    Then, we can
    define the data-to-solution map $ \calS \colon  X \to \calX $ for the forward problem \eqref{LoScSt:eq:PDE}.
    We also define
    the observation functional $ \calO \in (\calX^*)^{K} $, with a finite number $ K \in\N $
    of observations (e.g.\ representing sensors),
    and a goal functional (also called quantity of interest) $ G\in \calX^* $.
    We define the prior measure $ \pi_0 $ to be the uniform distribution on $U$.

    The observations $ \calO(\calS(a)) $ are assumed to be
    additionally subject to additive observation noise $ \eta $,
    which we assume 
    to be centered Gaussian, i.e., $ \eta \sim \calN(0,\Gamma) $
    for some known, nondegenerate covariance matrix $ \Gamma \in \RR^{K\times K}$.
    In other words,
    we assume given noisy observation data $ \delta \in \RR^K$
    modeled as
    \begin{equation}
        \delta = \calO(\calS(a)) + \eta \in L_{\Gamma}^2(\RR^K).
    \end{equation}
    We consider the Bayesian inverse problem of recovering the expected value of $ G(u) $,
    given observation data $ \delta $, that is $ \EE^{\pi_0}[G(u)| \delta] $.
    By Bayes' theorem \cite{LoScSt:Stu10},
    the posterior distribution $ \pi^{\delta} $ of $ \bsy |\delta $
    is absolutely continuous with respect to $ \pi_0 $ and
    its Radon-Nikodym derivative with respect to the prior $\pi_0$
    is
    \begin{equation}
        \frac{\rd \pi^{\delta}}{\rd \pi_0}(\bsy) = \frac{\Theta(\bsy)}{Z},
    \end{equation}
    where
    $ \Theta(\bsy) := \exp(-\frac12 |\delta - \calO(\calS(a(\bsy)))|_{\Gamma}^2 )
                    = \exp(-\frac12 |\delta - \calO(u(\bsy))|_{\Gamma}^2 ) $
    denotes the likelihood with the observation noise
    covariance-weighted, data-to-observation misfit,
    where
    $ \abs{x}_{\Gamma}^2 := x^{\top} \Gamma^{-1}x $
    and
    $ Z $
    is defined in \eqref{LoScSt:ratios}.
    As $ \Theta(\bsy) > 0 $ for all $ \bsy\in U $, $ Z>0 $.
    In the present setting,
    Bayesian inversion amounts to the numerical evaluation of the posterior mean
    \begin{equation*}
        \EE^{\pi^{\delta}}[G(u)] = \frac{1}{Z} \int_{U} G(u(\bsy)) \Theta(\bsy)\rd \bsy.
    \end{equation*}
    This is \eqref{LoScSt:ratios} upon setting
    $ \Theta'(\bsy) := G(u(\bsy)) \Theta(\bsy) $ and $ \calY = \RR $.
    Define the FE solution operator $ \calS_h\colon X \to \calX_h $ as the
    mapping $ \calS_h a(\bsy) = u_h(\bsy) $ via \eqref{LoScSt:eq:PDE2}.
    The FE approximations of $ \Theta,\Theta' $ used in \eqref{LoScSt:ratiosqmcfem} 
    are then
    $ \Theta_{h} = \exp(-\frac12 |\delta - \calO(u_h(\bsy))|_{\Gamma}^2 ) $
    and
    $ \Theta'_{h} = G(u_h(\bsy)) \Theta_{h}(\bsy)$, respectively.
	
	QMC-ratio estimators to approximate $\EE^{\pi^{\delta}}[G(u)]$ in BIPs have been investigated in a multi-level setting, among others, in \cite{LoScSt:DGLS17}. We utilize some of the results in this reference, to complement the (single-level) estimator with a-posteriori QMC-FEM  error control.
	
    \subsection{Optimal control with entropic risk measure (OCP)}
    \label{LoScSt:sec:OCEntrRisk}
    Let $ \calY = L^2(D) $, assume a parameter independent target state $ \hat{u}\in \calY $
    and a nonempty, closed and convex 
    set $ X \subseteq \calY $ of admissible controls.
    Throughout the rest of the paper,
    we identify $ \calY $ with its dual via Riesz representation
    and write $ \langle \cdot,\cdot\rangle $ for the inner product on $ \calY $.
    Once the affine parametric diffusion coefficient $ a(\bsy) $ is fixed,
    $ \eqref{LoScSt:eq:PDE} $ defines a linear solution operator 
    $ \calL^{\bsy}\colon \calY \to \calY $ by 
    $ \calL^{\bsy} f = \iota \circ u(\bsy) $ for all $ f\in \calY $, 
    where $ \iota $ denotes the continuous embedding $ \calX \subset \calY $.
    In particular, we view $ u(\bsy) $ as a function of the right-hand side $ f $ of \eqref{LoScSt:eq:PDE}.
    For a function $ \Phi\colon U\to \RR $ and some $\theta\in(0,\infty)$, 
    the entropic risk measure \cite{LoScSt:KS18} is defined by
    \begin{equation}
        \calR(\Phi) = \frac{1}{\theta} \log\left (\int_{U}\exp(\theta \Phi(\bsy)) \rd \bsy \right ).
    \end{equation}
    The entropic risk is especially relevant when favoring a risk averse behavior \cite{LoScSt:FK11}.
    We consider the following minimization problem, for fixed constants $ \alpha_1,\alpha_2 > 0 $
    \begin{equation}\label{LoScSt:ocexact}
        f^*:= \argmin_{f\in X} J(f), \qquad J(f) := \calR(\tfrac{\alpha_1}{2}\norm{\calL^{\bsy} f - \hat{u}}_{\calY}^2) + \tfrac{\alpha_2}{2} \norm{f}_{\calY}^2.
    \end{equation}
    Due to
    convexity of $ \calR $ and $ \alpha_2>0 $,
    the functional $ J $ is strongly convex so that \eqref{LoScSt:ocexact}
    is a well-posed minimization problem \cite{LoScSt:KS18,LoScSt:GKKSS22}.
	The weights $\alpha_1,\alpha_2$ are imposed to balance the entropic risk of the residual $\norm{\calL^{\bsy} f - \hat{u}}_{\calY}$ against the magnitude of the control $f$ via the regularization term $\tfrac{\alpha_2}{2} \norm{f}_{\calY}^2$.
	
    Define the shorthand notation $ \Phi_f(\bsy) = \tfrac{\alpha_1}{2}\norm{\calL^{\bsy} f - \hat{u}}_{\calY}^2 $ and the adjoint state given by $  q_f(\bsy) = \alpha_1 \calL^{\bsy} (\calL^{\bsy}f - \hat{u}) \in \calY$.
    Under the above conditions on $ X $, \eqref{LoScSt:ocexact} is equivalent to the inequality $ \langle J'(f^*),  f - f^*\rangle \ge 0 $ for all $ f\in X $, where in analogy with \cite[Lemma 3.6]{LoScSt:GKKSS22}  the Fr\'echet derivative $ J'(f) \in \calY $ of $ J $ at $ f\in X $ is
    \begin{equation}\label{LoScSt:derivJ}
        J'(f) = \frac{1}{\int_{U}\exp(\theta \Phi_f(\bsy) ) \rd \bsy} \int_U \exp(\theta \Phi_f(\bsy) ) q_f(\bsy) \rd \bsy + \alpha_2 f.
    \end{equation}
    Next, we replace in \eqref{LoScSt:ocexact} the integral over $ U $ by QMC rules and the exact solution operator $ \calL^{\bsy} $ by the Galerkin solution $ \calL_{h}^{\bsy} \colon \calY \to \calX_{h} $ defined by $ \calL_{h}^{\bsy}f = u_h(\bsy) $. Then, we obtain the discrete formulation
    \begin{equation}\label{ocapx}
        f_{m,h}^*:= \argmin_{f\in X} J_{m,h}(f), \qquad J_{m,h}(f) := \calR_{m}(\tfrac{\alpha_1}{2}\norm{\calL_h^{\bsy} f - \hat{u}}_{\calY}^2) + \tfrac{\alpha_2}{2} \norm{f}_{\calY}^2,
    \end{equation}
    where $ \calR_{m}(\Phi) = \frac{1}{\theta}\log\left(\frac{1}{b^m}\sum_{\bsy \in P_m} \exp(\theta\Phi(\bsy)) \right ) $ is again convex, due to positivity of the QMC quadrature weights.
    The derivative $ J'_{m,h}(f) \in \calY $ of $ J_{m,h} $ is
    analogous to \eqref{LoScSt:derivJ}, again replacing the integrals by sample averages over $ P_m $, $ \Phi_{h,f}(\bsy) = \tfrac{\alpha_1}{2}\norm{\calL_{h}^{\bsy} f - \hat{u}}_{\calY}^2 $ and $ q_{h,f}(\bsy) = \alpha_1 \calL_{h}^{\bsy} (\calL_{h}^{\bsy}f - \hat{u}) \in \calY$.
    The next proposition recasts the error in the approximation $ f_{m,h}^* \approx f^* $ to the form \eqref{LoScSt:eq:err}. Whenever it does not cause confusion, we will write $ q(\bsy) = q_{f_{m,h}^*}(\bsy)  $ and $ q_h(\bsy) = q_{h,f_{m,h}^*}(\bsy) $.
    \begin{proposition}\label{LoScSt:prop:upboundfstar}
        For $ \calY = L^2(D) $, assume $ Z,Z' $ in \eqref{LoScSt:ratios} are defined by 
        $ \Theta(\bsy) = \exp(\theta \Phi_{f_{m,h}^*}(\bsy)) $ and $ \Theta'(\bsy) = q(\bsy) \Theta(\bsy) $. 
        Similarly, let $ Z_{m,h}, Z'_{m,h} $ in \eqref{LoScSt:ratiosqmcfem} 
        be defined by $ \Theta_{h}(\bsy) = \exp(\theta \Phi_{h,f_{m,h}^*}(\bsy)) $ 
        and $ \Theta_{h}'(\bsy) = q_{h}(\bsy) \Theta_{h}(\bsy) $. 

	Then,
        \begin{equation}\label{LoScSt:upboundfstar}
            \norm{f^* - f_{m,h}^*}_{\calY} \le \frac{1}{\alpha_2} \norm{  J'(f_{m,h}^*) -J_{m,h}'(f_{m,h}^*) }_{\calY} = \frac{1}{\alpha_2}\norm{\frac{Z'}{Z} - \frac{Z'_{m,h}}{Z_{m,h}}}_{\calY}.
        \end{equation}
    \end{proposition}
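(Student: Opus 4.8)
The plan is to combine the first-order optimality conditions of the two minimization problems \eqref{LoScSt:ocexact} and \eqref{ocapx} with the strong convexity of $J$, and then to identify the resulting gradient difference with the ratio expression in \eqref{LoScSt:eq:err}. Recall that $\calR$ is convex and monotone with respect to the pointwise order on functions $U\to\RR$, and that $f\mapsto\Phi_f(\bsy)$ is convex for each fixed $\bsy$; hence $f\mapsto\calR(\Phi_f)$ is convex, and together with $\alpha_2>0$ the functional $J$ is strongly convex on $\calY$ with modulus $\alpha_2$, as already recorded after \eqref{LoScSt:ocexact} (see \cite{LoScSt:KS18,LoScSt:GKKSS22}). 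Since $X$ is nonempty, closed and convex, both $f^*$ and $f_{m,h}^*$ exist, are unique, and are characterized by the variational inequalities $\langle J'(f^*),f-f^*\rangle\ge 0$ and $\langle J_{m,h}'(f_{m,h}^*),f-f_{m,h}^*\rangle\ge 0$ for all $f\in X$; the latter uses only convexity of $J_{m,h}$ and the fact that $f_{m,h}^*$ minimizes $J_{m,h}$ over $X$.

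First I would test the variational inequality for $f^*$ with $f=f_{m,h}^*\in X$ and the one for $f_{m,h}^*$ with $f=f^*\in X$, and add the two, obtaining
\begin{equation*}
\langle J'(f^*)-J_{m,h}'(f_{m,h}^*),\,f_{m,h}^*-f^*\rangle \ge 0.
\end{equation*}
Inserting $\pm J'(f_{m,h}^*)$ and invoking the strong convexity estimate $\langle J'(f_{m,h}^*)-J'(f^*),f_{m,h}^*-f^*\rangle\ge\alpha_2\norm{f_{m,h}^*-f^*}_{\calY}^2$ then yields
\begin{equation*}
\langle J'(f_{m,h}^*)-J_{m,h}'(f_{m,h}^*),\,f_{m,h}^*-f^*\rangle \ge \alpha_2\norm{f_{m,h}^*-f^*}_{\calY}^2.
\end{equation*}
Bounding the left-hand side by Cauchy--Schwarz and dividing by $\norm{f_{m,h}^*-f^*}_{\calY}$ (the case $f^*=f_{m,h}^*$ being trivial) gives the first inequality in \eqref{LoScSt:upboundfstar}.

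For the final equality I would substitute the definitions. Evaluating \eqref{LoScSt:derivJ} at $f=f_{m,h}^*$ and using $\Theta(\bsy)=\exp(\theta\Phi_{f_{m,h}^*}(\bsy))$, $\Theta'(\bsy)=q(\bsy)\Theta(\bsy)$ gives $J'(f_{m,h}^*)=Z'/Z+\alpha_2 f_{m,h}^*$; the analogous discrete formula for $J_{m,h}'$, together with $\Theta_h(\bsy)=\exp(\theta\Phi_{h,f_{m,h}^*}(\bsy))$ and $\Theta_h'(\bsy)=q_h(\bsy)\Theta_h(\bsy)$, gives $J_{m,h}'(f_{m,h}^*)=Z'_{m,h}/Z_{m,h}+\alpha_2 f_{m,h}^*$. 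Subtracting cancels the $\alpha_2 f_{m,h}^*$ terms, and taking $\norm{\cdot}_{\calY}$ produces the claimed identity.

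The main obstacle is less a difficulty than a point requiring care: one must be sure that $J$ is strongly convex with the precise modulus $\alpha_2$ and that the stated form \eqref{LoScSt:derivJ} of the Fr\'echet derivative is correct, i.e. that differentiation passes through the integral and the chain rule through $\calR$ is justified; both rest on the cited results \cite{LoScSt:KS18,LoScSt:GKKSS22} (cf. \cite[Lemma 3.6]{LoScSt:GKKSS22}). Once these are in place, the remainder is the standard convex-analysis quasi-optimality estimate.
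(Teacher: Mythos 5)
Your proposal is correct and follows essentially the same route as the paper: the two variational inequalities combined with the $\alpha_2$-strong convexity of $J$ give the quasi-optimality bound, and the equality follows by substituting \eqref{LoScSt:derivJ} and its discrete analogue at $f_{m,h}^*$ so that the $\alpha_2 f_{m,h}^*$ terms cancel. The only difference is cosmetic bookkeeping in how the inner-product inequalities are chained together.
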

\begin{proof}
    The inequalities $ \langle J'(f^*),  f - f^*\rangle \ge 0 $ and $ \langle J_{m,h}'(f_{m,h}^*),  f - f_{m,h}^*\rangle \ge 0 $ for all $ f\in X $ imply that  $ \langle J_{m,h}'(f_{m,h}^*) - J'(f^*),  f^* - f_{m,h}^*\rangle \ge 0 $.
    Moreover, strong convexity of $ J $ yields the relation $ \langle J'(f^*) - J'(f_{m,h}^*) - \alpha_2 (f^* - f_{m,h}^*), f^* - f_{m,h}^* \rangle \ge 0 $.
    Thus, we get
    \begin{align*}
        \alpha_2 \norm{f^* - f_{m,h}^*}_{\calY}^2 &\le \langle  J_{m,h}'(f_{m,h}^*) -  J'(f^*) + \alpha_2 (f^* - f_{m,h}^*), f^* - f_{m,h}^* \rangle \\
        & \le \langle  J_{m,h}'(f_{m,h}^*) -  J'(f_{m,h}^*), f^* - f_{m,h}^* \rangle \\
        &\le \norm{J_{m,h}'(f_{m,h}^*) -  J'(f_{m,h}^*)}_{\calY} \norm{f^* - f_{m,h}^*}_{\calY},
    \end{align*}
    which implies the inequality in~\eqref{LoScSt:upboundfstar}. 
    The equality follows by substituting 
    $ J_{m,h}'(f_{m,h}^*) = \frac{Z_{m,h}'}{Z_{m,h}} + \alpha_2 f_{m,h}^* $ and \eqref{LoScSt:derivJ}.
\end{proof}
\begin{remark}\label{LoScSt:rmk:fstarmh}
    Note that $ \Theta, \Theta' $ as defined in Proposition \ref{LoScSt:prop:upboundfstar}, 
    and hence $ Z, Z' $, also implicitly depend on $ h $ via the discrete minimizer $ f_{m,h}^* $.
    In particular, the exact minimizer $ f^* $ does not appear in the right hand side of \eqref{LoScSt:upboundfstar}.
    This fact will be crucial for the ensuing a-posteriori error estimation methodology.
\end{remark}
    \section{A-posteriori QMC error estimation}
    \label{LoScSt:sec:QMCapost}
We develop computable a-posteriori error estimators for the PDE discretization error and for
the QMC-quadrature error, the latter being reliable independent of the integration dimension $s$.
    \subsection{Parametric regularity}
    \label{LoScSt:sec:ParReg}
    To leverage the results from \cite{LoScSt:DLS22,LoScSt:ML22DD} 
    and overcome the curse of dimensionality,
    we need to quantify the regularity with respect to $ \bsy \in U$.
    To this end,
    we write $ |\bsnu| = \sum_j \nu_j $, $ \operatorname*{supp}(\bsnu) = \{j : \nu_j\neq 0\} $
    and, given smooth 
    $ F \colon U \to \calY $ and $ \bsnu \in \calF := \{\bsnu \in \NN_0^{\NN} : |\operatorname*{supp}(\bsnu)| < \infty \} $
    we introduce the multi-index notation 
    $ \partial_{\bsy}^{\bsnu} F(\bsy) = \prod_{j\in \operatorname*{supp}(\bsnu)}\partial_{y_j}^{\nu_j}F(\bsy) $
    for the derivatives with respect to $ \bsy $.
    Moreover, given $ \bsbeta = (\beta_j)_{j\in \NN} \in\ell^p(\NN) $ for some $ p \in (0,1) $, $ n\in \NN $ and $ c > 0 $, 
    we define the SPOD weights $ \bsgamma = (\gamma_{\fraku})_{\fraku \subseteq \{1:s\}} $ via
    \begin{equation}\label{LoScSt:def:spod}
        \gamma_{\fraku} = \sum_{\bsnu \in \{1:\alpha\}^{|\fraku|}} (|\bsnu| + n)! \prod_{j\in \fraku} c\beta_j^{\nu_j}.
    \end{equation}
    \begin{definition}
        Let $ \calY $ be a separable Hilbert space, $ \alpha\in \NN $, $ \alpha \ge 2 $. 
       We define the weighted unanchored Sobolev space $ \calW_{s,\alpha,\bsgamma} $ 
       with dominating mixed smoothness  as the completion of $ C^{\infty}(U,\calY) $ with respect to the norm
        \begin{align*}
            \norm{F}_{s,\alpha,\bsgamma} &:= \max_{\fraku\subseteq \{1:s\}} \gamma_{\fraku}^{-1} \sum_{\frakv \subseteq \fraku} \sum_{\bsnu_{\fraku\setminus\frakv} \in \{1:\alpha\}^{|\fraku\setminus\frakv|}} \\ &\qquad\qquad\qquad
            \int_{[-\frac12,\frac12]^{|\frakv|}} \norm{\int_{[-\frac12,\frac12]^{s-|\frakv|}} \partial_{\bsy}^{(\bsnu_{\fraku\setminus\frakv},\bsalpha_{\frakv})}F(\bsy) \rd \bsy_{\{1:s\}\setminus\frakv} }_{\calY}\rd \bsy_{\frakv},
        \end{align*}
    where $ \bsmu = (\bsnu_{\fraku\setminus\frakv},\bsalpha_{\frakv}) \in \calF $ is such that $ \mu_j = \nu_j $ if $ j\in \fraku\setminus\frakv $, $\mu_j = \alpha $ if $ j\in \frakv $ and $ \mu_j = 0 $ otherwise.
    The inner integral is interpreted as a Bochner integral.
    \end{definition}
    The relevance of the space $ \calW_{s,\alpha,\bsgamma} $ in our context is justified by the following result, 
    which will be the starting point of our analysis.
    This is a consequence of the so-called component-by-component (CBC) construction as described in \cite{LoScSt:DLS22,LoScSt:ML22DD},
    which takes as input $ s, \alpha, \bsgamma $ and $ m $ and returns a polynomial lattice $ P_m $.
    \begin{theorem}\label{LoScSt:thm:apostqmc}
        Let $ \alpha\in \NN, \alpha \ge 2 $ and $ F\colon U \to \calY $ for some separable Hilbert space $ \calY $ 
        be such that $ F\in \calW_{s,\alpha,\bsgamma} $ for some weights $ \bsgamma $ of the form \eqref{LoScSt:def:spod} 
        for $ \bsbeta \in \ell^p(\NN) $, $ p \in (0,1/2]$. 
        Then, there exists a sequence $ (P_m)_{m\in \NN} $ of polynomial lattice rules such that as $ m\to \infty $
        \begin{equation*}
            {\mathfrak E_{m}(F)}:=\int_U F - \frac{1}{b^m} \sum_{\bsy\in P_m}F(\bsy) = \norm{F}_{s,\alpha,\bsgamma} O(b^{-m}),
        \end{equation*}
        as well as for all $ \eps>0 $
        \begin{equation*}
            {\mathfrak E_{m}(F)}  
            = \frac{1}{b^m}\sum_{\bsy\in P_m}F(\bsy) - \frac{1}{b^{m-1}} \sum_{\bsy\in P_{m-1}} F(\bsy) + \norm{F}_{s,\alpha,\bsgamma} O(b^{-2m + \eps}).
        \end{equation*}
    Here the constants hidden in $ O(\cdot) $ are independent of $ s,m\in \NN $ and $ F $, 
    but depend on $ \eps $ and $ \bsgamma $. 
    In addition, 
    the point sets $ (P_m)_{m\in \NN} $ 
    can be constructed explicitly by a CBC construction in $ O(smb^m + s^2b^m) $ operations.
    \end{theorem}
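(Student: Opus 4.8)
The plan is to invoke the existing convergence results for extrapolated polynomial lattice (EPL) rules, essentially \cite[Theorem~3.1 and Theorem~4.1]{LoScSt:DLS22} (see also \cite{LoScSt:ML22DD,LoScSt:DGY19}), applied to the integrand $F$ in the weighted unanchored Sobolev space $\calW_{s,\alpha,\bsgamma}$. The construction of the point sets $(P_m)_{m\in\NN}$ is the component-by-component (CBC) construction with the SPOD weights \eqref{LoScSt:def:spod}: one feeds in $s$, $\alpha$, $\bsgamma$ and a target $m$, and obtains a polynomial lattice rule $P_m$ with $|P_m|=b^m$ nodes. The first step is therefore to recall that this CBC construction produces, for each $m$, a rule whose worst-case error over the unit ball of $\calW_{s,\alpha,\bsgamma}$ is bounded by $C(\bsgamma,\alpha)\,b^{-m}$, with $C$ independent of $s$ and $m$; this is exactly where the summability $\bsbeta\in\ell^p$ with $p\le 1/2$ and the precise form of the SPOD weights enter, via the standard argument that balances the weight growth against the $\ell^p$-decay to keep the constant dimension-independent. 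Applying this worst-case bound to $F/\norm{F}_{s,\alpha,\bsgamma}$ gives the first displayed estimate $\mathfrak E_m(F)=\norm{F}_{s,\alpha,\bsgamma}\,O(b^{-m})$.

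For the second (extrapolation) estimate, the idea is that the EPL rules are designed so that the quadrature error admits an asymptotic expansion in powers of $b^{-m}$ whose leading term is reproduced by the difference of two successive rules. Concretely, writing $Q_m F=\frac1{b^m}\sum_{\bsy\in P_m}F(\bsy)$, one has $\int_U F - Q_m F = c_1(F)b^{-m} + O(b^{-2m+\eps})$ and likewise for $m-1$, so that subtracting yields
\begin{equation*}
Q_m F - Q_{m-1} F = \bigl(\textstyle\int_U F - Q_{m-1}F\bigr) - \bigl(\textstyle\int_U F - Q_m F\bigr) = c_1(F)(b^{-m+1}-b^{-m}) + O(b^{-2m+\eps}),
\end{equation*}
and hence $\mathfrak E_m(F) = Q_m F - Q_{m-1} F$ up to an $O(b^{-2m+\eps})$ remainder after absorbing the harmless constant factor $(b-1)^{-1}$ coming from comparing $b^{-m+1}-b^{-m}$ to $b^{-m}$. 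This is precisely the content of \cite[Theorem~4.1]{LoScSt:DLS22}, whose hypotheses require the strict inequality $p<1/2$ (as already flagged in the text after \eqref{LoScSt:eq:ellp}), because one needs the next-order term in the expansion to be genuinely smaller than the leading one, uniformly in $s$. I would therefore state the two regimes $p\le1/2$ (first estimate) and $p<1/2$ (second estimate) carefully and cite the corresponding theorems. Finally, the operation count $O(smb^m+s^2b^m)$ is a direct quotation of the cost of the CBC construction for SPOD weights with the fast algorithm from \cite{LoScSt:DLS22,LoScSt:ML22DD}: the $smb^m$ term is the lattice CBC search and the $s^2b^m$ term is the overhead of assembling the SPOD weight products.

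The main obstacle is not any single computation but the bookkeeping needed to make all hidden constants visibly independent of $s$ and $m$: one must check that the weights \eqref{LoScSt:def:spod} are admissible for the cited EPL theorems (i.e. that the CBC error bound for these SPOD weights is $s$-uniform under $\bsbeta\in\ell^p$), and that the $\eps$ in the exponent can be taken arbitrarily small at the cost of a constant depending on $\eps$ and $\bsgamma$ only. Since the present statement is essentially a repackaging of \cite[Theorems~3.1 and 4.1]{LoScSt:DLS22} in the notation of this paper — with $\calY$ a general separable Hilbert space, which is harmless because all arguments are norm-based and the Bochner integrals behave exactly as scalar integrals under the triangle inequality — the cleanest route is to phrase the proof as a verification that the hypotheses there are met and then quote the conclusions. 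I would keep the proof short: set up the CBC construction, cite the worst-case bound for the first claim, cite the extrapolation theorem for the second, and note that the complexity bound is the standard SPOD CBC cost.
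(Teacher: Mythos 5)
Your proposal follows essentially the same route as the paper: both parts of the theorem are quoted from \cite[Theorem 4.1]{LoScSt:DLS22} for scalar integrands, and the CBC cost bound is taken over verbatim. The one place where the paper's proof contains actual content beyond citation is the extension from $\calY=\RR$ to a general separable Hilbert space, and there your justification is the soft spot: you dismiss it as ``harmless because all arguments are norm-based and the Bochner integrals behave exactly as scalar integrals under the triangle inequality.'' The triangle inequality is not the right mechanism here --- pulling norms inside the integral and the quadrature sum destroys exactly the cancellation that the error bounds rely on, and the second (extrapolation) identity is an equality of $\calY$-valued quantities with a norm-controlled remainder, which the triangle inequality cannot deliver. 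The paper instead uses a one-line duality reduction: for any $v\in\calY$ with $\norm{v}_{\calY}=1$ one has $\partial_{\bsy}^{\bsnu}\langle F(\bsy),v\rangle=\langle\partial_{\bsy}^{\bsnu}F(\bsy),v\rangle$, hence $\norm{\langle F,v\rangle}_{s,\alpha,\bsgamma}\le\norm{F}_{s,\alpha,\bsgamma}$, so the scalar result applies to $\langle F,v\rangle$ with a constant uniform in $v$, and taking the supremum over such $v$ recovers the $\calY$-norm bounds (using linearity of the integral and of the quadrature to commute them with $\langle\cdot,v\rangle$). Your alternative of re-running the proof of the cited theorem in the Bochner setting would also work but requires opening up that proof; you should either do that explicitly or replace the triangle-inequality remark with the duality argument. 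Your flagging of the $p\le 1/2$ versus $p<1/2$ distinction for the two estimates is a correct and welcome observation, consistent with the paper's own remark after \eqref{LoScSt:eq:ellp}.
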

    \begin{proof}
        For the case $ \calY = \RR $, this is proved in \cite[Theorem 4.1]{LoScSt:DLS22}. 
        Otherwise, let $ v\in \calY $ arbitrary such that $ \norm{v}_{\calY} = 1 $. 
        Then, 
$ \partial_{\bsy}^{\bsnu}\langle F(\bsy),v \rangle = \langle \partial_{\bsy}^{\bsnu}F(\bsy),v \rangle $ 
for all $ \bsnu\in \calF $ implies $ \norm{\langle F,v \rangle}_{s,\alpha,\bsgamma} \le  \norm{F}_{s,\alpha,\bsgamma} $. 
    Hence, we reduced to the case $ \calY = \RR $ and by linearity of the integral and the quadrature we conclude.
    \end{proof}

    \begin{corollary}\label{LoScSt:coro:paramregbip}
        Assume that the PDE \eqref{LoScSt:eq:PDE} satisfies \eqref{LoScSt:eq:Param} and \eqref{LoScSt:eq:ellp}.
        Then we can construct polynomial lattice rules $ (P_m)_{m\in\N}$ (depending on $ \bsb $)
        in $ O(smb^m + s^2b^m) $ operations
        such that for all $ \eps > 0 $ it holds for the BIP
        \begin{equation*}
            Z - Z_m = Z_{m} - Z_{m-1} + O(b^{-2m+\eps}), \qquad
            Z' - Z'_m = Z'_{m} - Z'_{m-1} + O(b^{-2m+\eps}),
        \end{equation*}
        as $m\to\infty$. The hidden constants in $O(\cdot)$ 
        are independent of $ s,m \in \NN $.
    \end{corollary}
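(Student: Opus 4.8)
The plan is to reduce the statement to a single application of \RefThm{LoScSt:thm:apostqmc}, taken with $\calY=\RR$, to the two scalar BIP integrands $\Theta(\bsy)=\exp(-\tfrac12|\delta-\calO(u(\bsy))|_\Gamma^2)$ and $\Theta'(\bsy)=G(u(\bsy))\,\Theta(\bsy)$. That theorem already delivers exactly the two asserted identities (its second estimate, which requires only $p\le 1/2$), together with a polynomial lattice sequence built by the CBC construction in $O(smb^m+s^2b^m)$ operations. So everything comes down to exhibiting, with constants independent of $s$, a \emph{single} set of SPOD weights $\bsgamma$ of the form \eqref{LoScSt:def:spod}, generated by some $\bsbeta\in\ell^p(\NN)$, for which both $\Theta\in\calW_{s,\alpha,\bsgamma}$ and $\Theta'\in\calW_{s,\alpha,\bsgamma}$; the CBC construction is then run once for these $\bsgamma$, yielding the common sequence $(P_m)_{m\in\NN}$ (depending on $\bsb$).

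First I would record the parametric regularity of the forward solution. Differentiating the weak form \eqref{LoScSt:eq:PDE} in $\bsy$ and using $\partial_{y_j}a(\bsy)=\psi_j$ from \eqref{LoScSt:eq:Param} gives, for all $v\in\calX$ and $\bsnu\in\calF$ with $\bsnu\neq\bszero$,
\[
  \int_D a(\bsy)\,\nabla\partial_\bsy^\bsnu u(\bsy)\cdot\nabla v
  \;=\;-\sum_{j\in\operatorname{supp}(\bsnu)}\nu_j\int_D \psi_j\,\nabla\partial_\bsy^{\bsnu-\bse_j} u(\bsy)\cdot\nabla v .
\]
Testing with $v=\partial_\bsy^\bsnu u(\bsy)$, using the uniform coercivity constant $\operatorname{essinf}\psi_0-\kappa>0$ together with $\sum_{j\ge1}b_j<2$, and arguing by induction on $|\bsnu|$ yields the classical bound
\[
  \norm{\partial_\bsy^\bsnu u(\bsy)}_\calX\;\le\;c_0\,|\bsnu|!\;\bsb^\bsnu,\qquad \bsb^\bsnu:=\prod_{j\ge1}b_j^{\nu_j},
\]
uniformly in $\bsy\in U$, with $c_0$ depending only on $f$, $\kappa$ and $\sum_j b_j$ (cf.\ \cite{LoScSt:DGLS17,LoScSt:DLS22}).

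Next I would push these bounds through the observation and goal functionals and through the nonlinearity. Since $\calO\in(\calX^*)^K$ and $G\in\calX^*$ are bounded and $\bsy$-independent, $\partial_\bsy^\bsnu\calO(u(\bsy))$ and $\partial_\bsy^\bsnu G(u(\bsy))$ obey the same $|\bsnu|!\,\bsb^\bsnu$ bounds up to $\norm{\calO}$, $\norm{G}$. The likelihood $\Theta$ is the composition of the entire map $\zeta\mapsto\exp(-\tfrac12|\delta-\zeta|_\Gamma^2)$ on $\RR^K$ with $\bsy\mapsto\calO(u(\bsy))$; applying the multivariate Fa\`a di Bruno formula, and then the Leibniz rule to the product $\Theta'=G(u)\,\Theta$, one obtains constants $n\in\NN$ and $c,C>0$ — depending on $\Gamma,\delta,\norm{\calO},\norm{G},\kappa$ and $\sum_j b_j$ but not on $s$ — such that
\[
  \max\bigl\{|\partial_\bsy^\bsnu\Theta(\bsy)|,\ |\partial_\bsy^\bsnu\Theta'(\bsy)|\bigr\}\;\le\;C\,(|\bsnu|+n)!\prod_{j\ge1}(c\,b_j)^{\nu_j}\qquad(\bsy\in U,\ \bsnu\in\calF).
\]
With $\bsbeta:=(c\,b_j)_{j\ge1}$, which lies in $\ell^p(\NN)$ because $\bsb\in\ell^p(\NN)$ by \eqref{LoScSt:eq:ellp}, and for any fixed $\alpha\ge2$, these pointwise estimates are precisely what is needed to bound $\norm{\Theta}_{s,\alpha,\bsgamma}$ and $\norm{\Theta'}_{s,\alpha,\bsgamma}$ uniformly in $s$ for the SPOD weight $\bsgamma$ built from $\bsbeta$, $n$ (with a suitably enlarged $c$) as in \eqref{LoScSt:def:spod}. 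Hence $\Theta,\Theta'\in\calW_{s,\alpha,\bsgamma}$, and \RefThm{LoScSt:thm:apostqmc} applied to $F=\Theta$ and $F=\Theta'$ gives the two claimed relations, the $O(b^{-2m+\eps})$ constants being $s$- and $m$-independent because the weighted norms are.

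The main obstacle is this last propagation step: converting the factorial-times-$\bsb^\bsnu$ bounds for $u(\bsy)$ into SPOD-type bounds $(|\bsnu|+n)!\,\bsbeta^\bsnu$ for the nonlinear likelihood and for its product with the linear quantity of interest, while keeping every constant independent of the parameter dimension $s$. This is the combinatorial heart of the argument — it is where the affine-parametric structure \eqref{LoScSt:eq:Param} and the smallness condition $\sum_j b_j<2$ are genuinely exploited — but the required estimates are by now standard in the QMC analysis of Bayesian inverse problems \cite{LoScSt:DGLS17,LoScSt:DLS22,LoScSt:DGY19}, so no new difficulty arises.
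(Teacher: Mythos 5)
Your proposal is correct and follows essentially the same route as the paper: establish uniform-in-$s$ bounds on $\norm{\Theta}_{s,\alpha,\bsgamma}$ and $\norm{\Theta'}_{s,\alpha,\bsgamma}$ for SPOD weights generated by $\bsbeta\sim\bsb$, then invoke \RefThm{LoScSt:thm:apostqmc}; the paper simply outsources the derivative bounds (the induction for $u$, the Fa\`a di Bruno step for the likelihood, and the Leibniz step for $\Theta'$) to \cite{LoScSt:SS14} and \cite{LoScSt:DLS16}, obtaining them with $n=0$ and $\alpha=1+\lfloor 1/p\rfloor$, where you sketch them directly and leave $n$ and $\alpha$ generic.
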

    \begin{proof}
        From \cite[Section 4.1]{LoScSt:SS14}, \cite[Theorem 3.1]{LoScSt:DLS16} and 
    $ \bsnu! :=\prod_{j\in \operatorname*{supp}(\bsnu)} \nu_j! \le |\bsnu|!$, 
    we have that $ \sup_{s}\norm{\Theta}_{s,\alpha,\bsgamma} + \sup_{s}\norm{\Theta'}_{s,\alpha,\bsgamma} < \infty $ 
    for $ \alpha = 1 + \lfloor 1/p \rfloor $ and  some SPOD weights \eqref{LoScSt:def:spod} 
    defined by a sequence $ \bsbeta \sim \bsb $ and $ n = 0 $. 
    Hence we can apply Theorem \ref{LoScSt:thm:apostqmc} and conclude.
    \end{proof}

    A similar result can be given for the OCP,
    based on the results in \cite{LoScSt:GKKSS21,LoScSt:GKKSS22}.
    \begin{corollary}\label{LoScSt:coro:paramregoc}
        Assume that the PDE \eqref{LoScSt:eq:PDE} satisfies \eqref{LoScSt:eq:Param} and \eqref{LoScSt:eq:ellp}.
        Then we can construct polynomial lattice rules $ (P_m)_{{m\in \N}} $ (depending on $ \bsb $) 
        in $ O(smb^m + s^2b^m) $ operations
        such that for all $ \eps > 0 $ it holds for the OCP
        \begin{equation*}
            Z - Z_m = Z_{m} - Z_{m-1} + O(b^{-2m+\eps}), \qquad
            Z' - Z'_m = Z'_{m} - Z'_{m-1} + O(b^{-2m+\eps}),
        \end{equation*}
        as $m\to\infty$. 
        The hidden constants in $O(\cdot)$ 
        are independent of $ s,m \in \NN $.
    \end{corollary}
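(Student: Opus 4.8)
The plan is to follow the template of the proof of \RefCol{LoScSt:coro:paramregbip}: show that the OCP integrands $\Theta = \exp(\theta\Phi_{f_{m,h}^*})$ and $\Theta' = q_{f_{m,h}^*}\Theta$ belong to a weighted space $\calW_{s,\alpha,\bsgamma}$ with SPOD weights of the form \eqref{LoScSt:def:spod}, with norms bounded uniformly in $s$ as well as in $m$ and $h$, and then apply \RefThm{LoScSt:thm:apostqmc} to $F=\Theta$ and $F=\Theta'$ (note that $\calY = L^2(D)$ is separable Hilbert, while $\Theta$ is $\RR$-valued). The \emph{first step} is to control the discrete minimizer uniformly. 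Fixing an arbitrary reference control $f_0\in X$, optimality of $f_{m,h}^*$ in \eqref{ocapx} gives $\tfrac{\alpha_2}{2}\norm{f_{m,h}^*}_{\calY}^2 \le J_{m,h}(f_{m,h}^*) \le J_{m,h}(f_0)$; since $\calL_h^{\bsy} f_0 = u_h(\bsy)$ is bounded in $\calX$, hence in $\calY$, uniformly in $\bsy\in U$ and $h\in H$ by the Lax--Milgram lemma, the right-hand side is bounded by a constant depending only on the PDE data, $\hat u$, $\alpha_1,\alpha_2,\theta$ and $f_0$, so that $\sup_{m,h}\norm{f_{m,h}^*}_{\calY} =: C_0 < \infty$. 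Consequently $\sup_{\bsy\in U}\abs{\Phi_{f_{m,h}^*}(\bsy)}$ and $\sup_{\bsy\in U}\norm{q_{f_{m,h}^*}(\bsy)}_{\calY}$ are bounded uniformly in $m,h$ as well.

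Next I would propagate parametric regularity. By the affine structure \eqref{LoScSt:eq:Param}, the summability \eqref{LoScSt:eq:ellp}, and the recursive/holomorphy estimates of \cite{LoScSt:DLS16} (as adapted to the entropic-risk OCP in \cite{LoScSt:GKKSS21,LoScSt:GKKSS22}), the forward map satisfies $\norm{\partial_{\bsy}^{\bsnu}(\calL^{\bsy}g)}_{\calX} \le c\,|\bsnu|!\,\big(\prod_j b_j^{\nu_j}\big)\norm{g}_{\calY}$ for all $g\in\calY$ and all $\bsnu\in\calF$, with $c$ independent of $s$. Applying the Leibniz product rule to $\Phi_f(\bsy) = \tfrac{\alpha_1}{2}\langle \calL^{\bsy}f-\hat u,\calL^{\bsy}f-\hat u\rangle$ and to $q_f(\bsy) = \alpha_1\calL^{\bsy}(\calL^{\bsy}f-\hat u)$, and the Fa\`a di Bruno formula to $\exp(\theta\Phi_f(\bsy))$ together with the uniform bounds from Step~1, one obtains estimates of the form $\abs{\partial_{\bsy}^{\bsnu}\Theta(\bsy)} + \norm{\partial_{\bsy}^{\bsnu}\Theta'(\bsy)}_{\calY} \le \tilde c\,(|\bsnu|+n)!\,\prod_j \tilde c\, b_j^{\nu_j}$ for suitable $\tilde c, n$ independent of $s,m,h$. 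These are precisely the bounds that are absorbed by the SPOD weights \eqref{LoScSt:def:spod} for $\alpha = 1 + \floor{1/p}$ and a sequence $\bsbeta \sim \bsb$, yielding $\sup_{s,m,h}\big(\norm{\Theta}_{s,\alpha,\bsgamma} + \norm{\Theta'}_{s,\alpha,\bsgamma}\big) < \infty$. \RefThm{LoScSt:thm:apostqmc} then delivers the claimed identities for $Z-Z_m$ and $Z'-Z'_m$, with the CBC cost bound inherited from that theorem.

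The main obstacle is the book-keeping in the derivative estimates: one must check that the factorial-in-$|\bsnu|$ combinatorial constants generated by the iterated Leibniz rules --- in particular for the product of two copies of $\calL^{\bsy}$ appearing in the adjoint state $q_f$ --- and by the Fa\`a di Bruno expansion of the exponential remain dominated by a single $(|\bsnu|+n)!$ factor together with the geometric product $\prod_j b_j^{\nu_j}$, for one choice of $n$ and one constant independent of $s$. This is exactly the parametric regularity analysis carried out for the entropic-risk OCP in \cite{LoScSt:GKKSS21,LoScSt:GKKSS22}; the only additional ingredient here is uniformity with respect to $m$ and $h$, which is supplied entirely by the uniform bound $C_0$ on $\norm{f_{m,h}^*}_{\calY}$ from Step~1 (cf.\ Remark~\ref{LoScSt:rmk:fstarmh}). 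Once this is in place, the invocation of \RefThm{LoScSt:thm:apostqmc} is routine.
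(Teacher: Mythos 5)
Your proposal is correct and follows essentially the same route as the paper: invoke the parametric regularity estimates for the entropic-risk OCP from \cite{LoScSt:GKKSS21,LoScSt:GKKSS22} to conclude $\sup_s(\norm{\Theta}_{s,\alpha,\bsgamma}+\norm{\Theta'}_{s,\alpha,\bsgamma})<\infty$ for SPOD weights with $\bsbeta\sim\bsb$ and $\alpha=1+\floor{1/p}$ (the paper fixes $n=2$ where you leave $n$ generic), and then apply \RefThm{LoScSt:thm:apostqmc}. Your additional Step~1 establishing the uniform bound $\sup_{m,h}\norm{f_{m,h}^*}_{\calY}<\infty$ is a useful explicit justification of the $m$-uniformity that the paper leaves implicit.
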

\begin{proof}
Applying the result of \cite[Lemma 4.6]{LoScSt:GKKSS21} in \cite[Theorems 5.4 and 5.6]{LoScSt:GKKSS22},
we conclude that 
$ \sup_{s}\norm{\Theta}_{s,\alpha,\bsgamma} + \sup_{s}\norm{\Theta'}_{s,\alpha,\bsgamma} < \infty $ 
for $ \alpha = 1 + \lfloor 1/p \rfloor $ 
and for some SPOD weights \eqref{LoScSt:def:spod} defined by a sequence $ \bsbeta \sim \bsb $ and $ n = 2 $.
\end{proof}
    \subsection{Ratio error estimator}
\label{LoScSt:sec:RatEst}
    \begin{proposition}\label{LoScSt:prop:bipQMCest}
        Assume that the PDE \eqref{LoScSt:eq:PDE} satisfies \eqref{LoScSt:eq:Param} and \eqref{LoScSt:eq:ellp}.
        Then, for the BIP and the OCP, we can construct polynomial lattice rules $ (P_m)_{{m\in\N}}$ such that 
        \begin{equation}
        \frac{Z'}{Z} - \frac{Z'_{m}}{Z_{m}}  
         = 
        \frac{1}{bZ_{m} - Z_{{m-1}}} \left ( \frac{ Z_{{m-1}}Z'_{m} - Z_{m} Z'_{{m-1}}}{  Z_{m} }  \right ) 
         +  O(b^{-2m+ \eps}),
        \end{equation}
        holds for all $ \eps>0$ as $ m\to \infty $.
        The hidden constant in $O(\cdot)$ is independent of $ s,m $.
    \end{proposition}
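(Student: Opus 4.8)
The plan is to deduce \RefProp{LoScSt:prop:bipQMCest} from the a-posteriori increment estimates of \RefCol{LoScSt:coro:paramregbip} (BIP) and \RefCol{LoScSt:coro:paramregoc} (OCP) by an elementary perturbation analysis of the ratio $Z'/Z$. First I would record the qualitative facts that make the argument close: since $\Theta>0$ on $U$ and the QMC weights $b^{-m}$ are positive, one has $Z_m\ge c>0$ uniformly in $m$, and $Z>0$; moreover $Z_m,Z_{m-1}\to Z$ and $Z'_m,Z'_{m-1}\to Z'$ are bounded uniformly in $m$ (and, by the cited corollaries, uniformly in $s$), while the QMC increments obey $Z_m-Z_{m-1}=\mathfrak{E}_{m-1}(\Theta)-\mathfrak{E}_m(\Theta)=O(b^{-m})$ and $Z'_m-Z'_{m-1}=O(b^{-m})$ in $\calY$, by the first assertion of \RefThm{LoScSt:thm:apostqmc}. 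It is crucial that these increments are \emph{one} power of $b$ below $O(1)$, and likewise that the cross-difference $Z_{m-1}Z'_m-Z_mZ'_{m-1}=Z_{m-1}(Z'_m-Z'_{m-1})+(Z_{m-1}-Z_m)Z'_{m-1}$ is $O(b^{-m})$; this is the gain that will make an $O(b^{-2m+\eps})$ perturbation of the (non-degenerate) denominator harmless.

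Next I would write the error over a common denominator,
\[
  \frac{Z'}{Z}-\frac{Z'_m}{Z_m}=\frac{Z'Z_m-ZZ'_m}{ZZ_m}=\frac{Z_m\,(Z'-Z'_m)-Z'_m\,(Z-Z_m)}{ZZ_m},
\]
and substitute the estimates of \RefCol{LoScSt:coro:paramregbip} (resp.\ \RefCol{LoScSt:coro:paramregoc}) in the equivalent $b$-weighted form $(b-1)Z=bZ_m-Z_{m-1}+O(b^{-2m+\eps})$ and $(b-1)Z'=bZ'_m-Z'_{m-1}+O(b^{-2m+\eps})$ into both numerator and denominator. In the numerator the leading $Z_mZ'_m$ contributions cancel and, using boundedness of $Z_m,Z'_m$, what remains is $\tfrac{1}{b-1}\bigl(Z_{m-1}Z'_m-Z_mZ'_{m-1}\bigr)+O(b^{-2m+\eps})$; in the denominator $ZZ_m=\tfrac{1}{b-1}(bZ_m-Z_{m-1})Z_m+O(b^{-2m+\eps})$. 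The common factor $\tfrac1{b-1}$ cancels. Since $bZ_m-Z_{m-1}\to(b-1)Z>0$, the quantity $M:=(bZ_m-Z_{m-1})Z_m$ is bounded away from $0$ for $m$ large, so expanding $\dfrac{N+O(b^{-2m+\eps})}{M+O(b^{-2m+\eps})}$ with $N:=Z_{m-1}Z'_m-Z_mZ'_{m-1}=O(b^{-m})$ gives $\dfrac{N}{M}+O(b^{-2m+\eps})$, the denominator perturbation only contributing $N\cdot O(b^{-2m+\eps})=O(b^{-3m+\eps})$. This is precisely $\dfrac{Z_{m-1}Z'_m-Z_mZ'_{m-1}}{(bZ_m-Z_{m-1})Z_m}+O(b^{-2m+\eps})$, and the bracketed expression in the statement is this same quotient regrouped.

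For the OCP case $\calY=L^2(D)$ the computation is verbatim the same: $Z'_m,Z'_{m-1}$ and $Z_{m-1}Z'_m-Z_mZ'_{m-1}$ are $\calY$-valued, $Z_m,Z_{m-1},bZ_m-Z_{m-1}$ are positive scalars, and one uses $\norm{\lambda v}_{\calY}=\abs{\lambda}\,\norm{v}_{\calY}$ throughout; one additionally needs here that $\Theta(\bsy)=\exp(\theta\Phi_{f^*_{m,h}}(\bsy))>0$ and that $f^*_{m,h}$ (hence $\Theta,\Theta'$ and their QMC averages) are bounded uniformly in $m,h$, which follows from $J_{m,h}(f^*_{m,h})\le J_{m,h}(0)$ together with $\alpha_2>0$. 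The $s$-independence of the hidden constant is then inherited from the corresponding statements in \RefCol{LoScSt:coro:paramregbip} and \RefCol{LoScSt:coro:paramregoc} and from the $s$-uniform two-sided bounds on $Z_m$ and the upper bounds on $Z_m,Z'_m$. I expect the only genuinely delicate point to be this order bookkeeping — establishing that the numerator $Z_{m-1}Z'_m-Z_mZ'_{m-1}$ is $O(b^{-m})$ rather than merely $O(1)$ — since it is exactly that one-power gain which lets the $O(b^{-2m+\eps})$ error in the denominator be absorbed; everything else is routine algebra.
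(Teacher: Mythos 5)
Your argument is essentially identical to the paper's proof: both write the error over the common denominator $ZZ_m$, substitute the Richardson-type increment identities $Z-Z_m=\tfrac{1}{b-1}(Z_m-Z_{m-1})+O(b^{-2m+\eps})$ (your ``$b$-weighted form'' is precisely the substitution the paper performs), observe that the leading terms cancel in the numerator while the denominator becomes $\tfrac{1}{b-1}(bZ_m-Z_{m-1})Z_m+O(b^{-2m+\eps})$, and absorb the denominator perturbation using its nondegeneracy together with the one-power gain $Z_{m-1}Z'_m-Z_mZ'_{m-1}=O(b^{-m})$ (the paper does this via a geometric-series expansion of $1/(A_m+B_m)$, which is the same bookkeeping). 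The proposal is correct and requires no changes.
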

    \begin{proof}
        First,
        $ Z - Z_{m} \to 0 $ as $ m\to \infty $ implies that $ Z_{m} > Z/2 > 0 $ for $ m $ sufficiently large.
        Next, due to either Corollary \ref{LoScSt:coro:paramregbip} or \ref{LoScSt:coro:paramregoc}, 
        for all $ \eps>0 $ we have
        \begin{align*}
            &\frac{Z'}{Z} - \frac{Z'_{m}}{Z_{m}} \\
            & = \frac{(Z' - Z'_{m}) Z_{m} - (Z - Z_{m}) Z'_{m}}{(Z - Z_{m} )  Z_{m} + Z_{m}^2}
            \\
            & =  \frac{\frac{1}{b-1}(Z'_{m} - Z'_{{m-1}} + O(b^{-2m+ \eps})) Z_{m} - \frac{1}{b-1}(Z_{m} - Z_{{m-1}} + O(b^{-2m+ \eps})) Z'_{m}}{\frac{1}{b-1}(Z_{m} - Z_{{m-1}} + O(b^{-2m+ \eps}) )  Z_{m} + Z_{m}^2}
            \\
            & = \frac{\frac{1}{b-1}(Z'_{m} - Z'_{{m-1}})}{\frac{1}{b-1}(bZ_{m} - Z_{{m-1}}) + O(b^{-2m+ \eps}) } \\ &\quad
            - \frac{\frac{1}{b-1}(Z_{m} - Z_{{m-1}} ) Z'_{m}}{(\frac{1}{b-1}(bZ_{m} - Z_{{m-1}}) + O(b^{-2m+ \eps}) )  Z_{m} } +  O(b^{-2m+ \eps}).
            %
        \end{align*}
        Clearly $ A_m := \frac{1}{b-1}(bZ_{m} - Z_{{m-1}}) \to Z $ as $ m\to \infty $, so that $ A_m > Z/2>0 $ for $ m $ sufficiently large.
        Hence, by a geometric sum argument,
        collecting in $ B_m $ all terms contained in
        $ O(b^{-2m+\eps}) $ in the denominator,
        we obtain for $ m \to\infty $ that 
        \begin{equation*}
            \frac{1}{A_m + B_m} 
            =  
             \frac{1}{A_m}\sum_{k=0}^{\infty}(-1)^k \left (\frac{B_m}{A_m}\right )^k 
            = 
             \frac{1}{A_m} + O(b^{-2m+\eps}).
        \end{equation*}
        Therefore, as $m\to\infty$
        \begin{equation}
            \frac{Z'}{Z} - \frac{Z'_{m}}{Z_{m}}  
            = 
            \frac{\frac{1}{b-1} (Z'_{m} - Z'_{{m-1}})}{A_m} - \frac{\frac{1}{b-1}(Z_{m} - Z_{{m-1}} ) Z'_{m}}{A_m  Z_{m} } 
            +  O(b^{-2m+ \eps}),
        \end{equation}
        which, upon rearranging the terms,  is the claim.
    \end{proof}
    Since $ \dfrac{Z'}{Z} - \dfrac{Z'_{m}}{Z_{m}} = O(b^{-m})  \gg O(b^{-2m + \eps}) $, 
    Proposition \ref{LoScSt:prop:bipQMCest} states that
    \begin{equation}\label{qmcerrest}
        E_{b^m}(\Theta',\Theta) := \frac{1}{bZ_{m} - Z_{{m-1}}} \left ( \frac{ Z_{{m-1}}Z'_{m} - Z_{m} Z'_{{m-1}}}{  Z_{m} }  \right )
    \end{equation}
    is a computable, asymptotically exact error estimator.
    \section{A-posteriori FEM error estimation}
    \label{LoScSt:sec:GalDis}
    In practice the parametric solution $ u(\bsy) \in \calX $ is not exactly available,
    and hence $ Z_{m},Z'_{m} $ are not computable.
    For any $ \bsy\in U $,  $ u(\bsy) $
    will be approximated by the
    corresponding Galerkin discretizations $ u_{h}(\bsy) \in \calX_h$.
    \subsection{Ratio error estimator}
    Let $ \Theta_{h}, \Theta'_{h}, Z_{h}, Z'_{h} $
    be defined replacing $ u $ and $ q $ by $ u_{h}, q_{h} $
    in the definitions of $ \Theta, \Theta' ,Z, Z' $, respectively.
    Similarly, let $ Z_{m,h}, Z'_{m,h} $
    be defined replacing $ u $ and $ q $ by $ u_{h}, q_{h} $
    in the definitions of $ Z_{m}, Z'_{m} $, respectively.


    \begin{proposition}\label{LoScSt:prop:bipFEMest}
        Assume there exist $ \zeta_{m,h}, \zeta'_{m,h} $ such that for some $ c >0 $ independent of $ h \in H $, $ \abs{Z_{m}  - Z_{m,h}} \le c \zeta_{m,h} $ and
        $ \norm{Z'_{m}  - Z_{m,h}'}_{\calY} \le c \zeta_{m,h}' $. Assume that $ \zeta_{m,h} \to 0 $ as $ h \to 0 $.
        Then, there exists $ h_0 \in H $ such that for all $ h \le h_0 $
        \begin{equation*}
            \norm{\frac{Z'_{m}}{Z_{m}} - \frac{Z_{m,h}'}{Z_{m,h}}}_{\calY} \le c\frac{Z_{m,h}\zeta_{m,h}' + \norm{Z_{m,h}'}_{\calY}\zeta_{m,h}}{Z_{m,h}^2 - c\zeta_{m,h} Z_{m,h}}.
        \end{equation*}
    \end{proposition}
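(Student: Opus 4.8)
The plan is to reduce the claim to an elementary perturbation estimate for a quotient $Z'/Z$ whose numerator lies in the Hilbert space $\calY$ and whose denominator is a strictly positive scalar. First I would record the positivity available from the context: in both the BIP and the OCP the integrands $\Theta$ and $\Theta_h$ are exponentials, hence strictly positive on $P_m$, so that $Z_m>0$ and $Z_{m,h}>0$. Combining $\abs{Z_m-Z_{m,h}}\le c\zeta_{m,h}$ with $\zeta_{m,h}\to0$ as $h\to0$ gives $Z_{m,h}\to Z_m>0$ and $c\zeta_{m,h}\to0$, so there is $h_0\in H$ such that $Z_{m,h}-c\zeta_{m,h}>0$ --- and therefore $Z_{m,h}^2-c\zeta_{m,h}Z_{m,h}>0$ --- for all $h\le h_0$. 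Fix such an $h_0$ for the rest of the argument.

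The algebraic core is the identity
\begin{equation*}
\frac{Z'_m}{Z_m}-\frac{Z'_{m,h}}{Z_{m,h}}
= \frac{Z'_m-Z'_{m,h}}{Z_m} + Z'_{m,h}\,\frac{Z_{m,h}-Z_m}{Z_m\,Z_{m,h}},
\end{equation*}
obtained by writing the left-hand side as $(Z'_m Z_{m,h}-Z'_{m,h}Z_m)/(Z_m Z_{m,h})$ and adding and subtracting $Z'_{m,h}Z_{m,h}$ in the numerator. Taking $\norm{\cdot}_{\calY}$, applying the triangle inequality, and inserting the hypotheses $\norm{Z'_m-Z'_{m,h}}_{\calY}\le c\zeta'_{m,h}$ and $\abs{Z_m-Z_{m,h}}\le c\zeta_{m,h}$ yields, for every $h\le h_0$,
\begin{equation*}
\norm{\frac{Z'_m}{Z_m}-\frac{Z'_{m,h}}{Z_{m,h}}}_{\calY}
\le \frac{c\,\zeta'_{m,h}}{Z_m} + \frac{\norm{Z'_{m,h}}_{\calY}\,c\,\zeta_{m,h}}{Z_m\,Z_{m,h}}.
\end{equation*}

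Finally I would bound the remaining occurrences of $Z_m$ from below via $Z_m\ge Z_{m,h}-\abs{Z_m-Z_{m,h}}\ge Z_{m,h}-c\zeta_{m,h}>0$, replace $1/Z_m$ by $1/(Z_{m,h}-c\zeta_{m,h})$ in both terms, and put the two fractions over the common denominator $(Z_{m,h}-c\zeta_{m,h})\,Z_{m,h}=Z_{m,h}^2-c\zeta_{m,h}Z_{m,h}$; the numerator then becomes $c\,Z_{m,h}\zeta'_{m,h}+c\,\norm{Z'_{m,h}}_{\calY}\zeta_{m,h}$, which is exactly the asserted estimate. There is no real obstacle here beyond bookkeeping: the only point requiring care is the threshold argument guaranteeing $Z_m>0$ and $Z_{m,h}-c\zeta_{m,h}>0$, which is precisely where the hypothesis $\zeta_{m,h}\to0$ (hence the existence of $h_0$) enters; everything after the splitting identity is the triangle inequality and the recombination of the two resulting fractions.
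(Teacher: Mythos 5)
Your argument is correct and is essentially the paper's own proof: the same threshold argument producing $h_0$ with $Z_{m,h}-c\zeta_{m,h}>0$, the same rearrangement of the numerator $Z'_mZ_{m,h}-Z'_{m,h}Z_m$, and the same lower bound $Z_mZ_{m,h}\ge Z_{m,h}^2-c\zeta_{m,h}Z_{m,h}$ for the denominator. The only cosmetic difference is that you split the quotient difference into two fractions before estimating, whereas the paper keeps a single fraction and bounds numerator and denominator separately; the resulting bound is identical.
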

    \begin{proof}
        Due to the limit $ \zeta_{m,h} \to 0 $ there exists $ h_1 \in H $ such that $ Z_{m,h} \ge Z_{m}/2 > 0 $ for all $ h \le h_1, h\in H $.
        Therefore, we can pick $ h_0 \in H $ such that $ Z_{m,h} > c \zeta_{m,h} $ for all $ h \le h_0, h \in H$.
        Thus
        \begin{align*}
            \norm{\frac{Z'_{m}}{Z_{m}} - \frac{Z_{m,h}'}{Z_{m,h}}}_{\calY} &=
            \norm{
                \frac{(Z'_{m} - Z'_{m,h}) Z_{m,h} - (Z_{m} - Z_{m,h}) Z'_{m,h}}{(Z_{m} - Z_{m,h} )  Z_{m,h} + Z_{m,h}^2} }_{\calY}
            \\
            &\le
            \frac{\norm{Z'_{m} - Z'_{m,h}}_{\calY} Z_{m,h} + \abs{Z_{m} - Z_{m,h}} \norm{Z'_{m,h}}_{\calY}}{Z_{m,h}^2 -\abs{Z_{m} - Z_{m,h}}   Z_{m,h}}
            \\
            &\le c \frac{Z_{m,h}\zeta_{m,h}' + \norm{Z_{m,h}'}_{\calY}\zeta_{m,h}}{Z_{m,h}^2 - c\zeta_{m,h} Z_{m,h}}.
        \end{align*}
    \end{proof}
Due to this result, we are left with the task of finding computable $ \zeta_{m,h}, \zeta'_{m,h} $ satisfying the conditions $ \abs{Z_{m}  - Z_{m,h}} \le c \zeta_{m,h} $ and
$ \norm{Z'_{m}  - Z_{m,h}'}_{\calY} \le c \zeta_{m,h}' $ for some $ c>0  $ independent of $ m,h $.
Thus,
in the following sections we will provide such error estimators for BIP and OCP.
\subsection{FEM error estimators for BIP}
\label{LoScSt:sec:FE4BIP}

For a finite collection of observation functionals
$ \calO = (\calO_1,\ldots,\calO_K) \in (\calX^*)^K$,
define
$ \norm{\calO}_{\calX^*} = \sqrt{\sum_{k=1}^{K} \norm{\calO_k}_{\calX^*}^2} $.
The starting point
to estimate the FEM error
will be the following well-known result, e.g.\ \cite{LoScSt:NSV09}.
Let $ \{\calT_h\}_{h\in H}$ be a family of shape-regular, simplicial meshes of $ D $
and let $ \mathbb{P}_k(\calT_h) $ be the set of piecewise polynomial functions
on $ \calT_h $ of degree at most $ k\in \NN_0 $ in each $ T\in \calT_h $.
Let $ \calE_h $ be the set of interior edges of all elements $ T\in \calT_h $.
We assume that $ \calX_{h} := \mathbb{P}_1(\calT_h) \cap \calX $,
that $ f\in L^2(D) $
and that $ a(\bsy) \in W^{1,\infty}(D) $.
Let $ h_T = |T|^{1/2} $ for $ T\in \calT_h $ and $ h_e $ the length of an edge $ e\in \calE_h $.
Then we define the a-posteriori error estimator
\begin{equation}\label{LoScSt:eq:residuals}
	\begin{split}
        \eta_{\bsy,h}^2 := &\sum_{T\in \calT_h} \Bigg[h_T^2\norm{f + \operatorname*{div}(a(\bsy) \nabla u_h(\bsy)) }_{L^2(T)}^2 
        \\ &\qquad\qquad
        + \frac{1}{2} \sum_{e \subseteq \partial T, e\in \calE_h} h_e \norm{\left\llbracket a(\bsy)\nabla u_h(\bsy) \right\rrbracket}_{L^2(e)}^2\Bigg ].
    \end{split}
\end{equation}
By \cite[Theorem 6.3]{LoScSt:NSV09} there exists
some $ c^* > 0 $, only depending on $ D $ and the shape regularity constant of $ \{\calT_h\}_{h\in H} $, such that for all $ \bsy\in U $ and $ h \in H $
    \begin{equation}\label{LoScSt:reliability}
        \norm{u(\bsy) - u_{h}(\bsy)}_{\calX} \le c^* \eta_{\bsy,h}.
    \end{equation}
For the important special case $\calO\in (L^2(D))^K$ and $G\in L^2(D)$ we may derive sharper estimates. 
To simplify the presentation, 
we assume here that the physical domain $ D\subseteq \RR^2 $ 
is a convex polygon (see also Remark~\ref{LoScSt:rem:non-convex} below), 
and introduce the $L^2(D)-$residual estimator
\begin{equation}\label{LoScSt:eq:residualsL2}
	\begin{split}
		\tilde{\eta}_{\bsy,h}^2 := &\sum_{T\in \calT_h} \Bigg[h_T^4\norm{f_{m,h}^* + \operatorname*{div}(a(\bsy) \nabla u_h(\bsy)) }_{L^2(T)}^2 
		\\ &\qquad\qquad
		+ \frac{1}{2} \sum_{e \subseteq \partial T, e\in \calE_h} h_e^3 \norm{\left\llbracket a(\bsy)\nabla u_h(\bsy) \right\rrbracket}_{L^2(e)}^2\Bigg ].
	\end{split}
\end{equation}
The additional factors $ h_T,h_e $ are derived from a standard duality argument,
see, e.g. \cite[Section 1.11]{LoScSt:Ver13BB}.
Then, there exists some $ c^* > 0 $ depending only on $ D $ 
and the shape regularity constant of $ \{\calT_h\}_{h\in H} $, such that for all $ \bsy\in U $ and $ h \in H $
\begin{equation}\label{LoScSt:reliabilityL2}
	\norm{u(\bsy) - u_{h}(\bsy)}_{L^2(D)} \le c^*    \tilde{\eta}_{\bsy,h}.
\end{equation}
    \begin{lemma}\label{LoScSt:lem:bipFEMest1}
        Fix a regular mesh $ \calT_h $ of simplices in $D$ and 
        a parameter vector $ \bsy \in U $ and assume \eqref{LoScSt:reliability}. 
        Then
        \begin{equation*}
            \abs{\Theta(\bsy) - \Theta_{h}(\bsy)} \le 
            \Theta_{h}(\bsy) (e^{\chi_{\bsy,h}} -1)=:\zeta_{\bsy,h}
         \end{equation*}
     	holds for 
     	\begin{equation*}    	
             \chi_{\bsy,h}:=\norm{\Gamma^{-1/2}\calO}_{\calX^*}\left [\abs{\delta- \calO(u_{h}(\bsy))}_{\Gamma} + \frac{1}{2} \norm{\Gamma^{-1/2}\calO}_{\calX^*} c^* \eta_{\bsy,h}\right ] c^* \eta_{\bsy,h}.
        \end{equation*}
    	Furthermore, if $\calO\in (L^2(D))^K$ and $G\in L^2(D)$, then 
   		\begin{equation*}
   			\abs{\Theta(\bsy) - \Theta_{h}(\bsy)} \le 
   			\Theta_{h}(\bsy) (e^{\tilde \chi_{\bsy,h}} -1):=\tilde \zeta_{\bsy,h}
   		\end{equation*}
    	holds for  
    	\begin{equation*}    	
    		\tilde{\chi}_{\bsy,h}:=\norm{\Gamma^{-1/2}\calO}_{L^2(D)}\left [\abs{\delta- \calO(u_{h}(\bsy))}_{\Gamma} + \frac{1}{2} \norm{\Gamma^{-1/2}\calO}_{L^2(D)} c^* \tilde{\eta}_{\bsy,h}\right ] c^* \tilde{\eta}_{\bsy,h}.
    	\end{equation*}
    \end{lemma}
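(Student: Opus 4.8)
The plan is to factor out $\Theta_h(\bsy)$ and reduce the whole statement to an elementary exponential estimate. Write
\[
g_{\bsy,h} := \tfrac12\abs{\delta - \calO(u_h(\bsy))}_\Gamma^2 - \tfrac12\abs{\delta - \calO(u(\bsy))}_\Gamma^2 ,
\]
so that $\Theta(\bsy) = \Theta_h(\bsy)\,e^{g_{\bsy,h}}$ and hence $\abs{\Theta(\bsy) - \Theta_h(\bsy)} = \Theta_h(\bsy)\,\abs{e^{g_{\bsy,h}} - 1}$. I would first record the elementary bound $\abs{e^{t} - 1} \le e^{\abs{t}} - 1$ valid for all $t \in \RR$ (an equality for $t \ge 0$; for $t < 0$ it reduces to $2 \le e^{t} + e^{-t}$), together with the monotonicity of $t \mapsto e^{t} - 1$ on $[0,\infty)$. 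Consequently it suffices to prove $\abs{g_{\bsy,h}} \le \chi_{\bsy,h}$ (respectively $\abs{g_{\bsy,h}} \le \tilde\chi_{\bsy,h}$ in the $L^2$ case): the two claimed inequalities then follow upon multiplying through by $\Theta_h(\bsy) \ge 0$.

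To estimate $\abs{g_{\bsy,h}}$ I would use a difference-of-squares step for the weighted Euclidean norm $\abs{\cdot}_\Gamma$. Setting $a := \delta - \calO(u(\bsy))$ and $b := \delta - \calO(u_h(\bsy))$, the reverse triangle inequality gives $\bigl|\,\abs{b}_\Gamma^2 - \abs{a}_\Gamma^2\,\bigr| = \bigl|\,\abs{b}_\Gamma - \abs{a}_\Gamma\,\bigr|\,(\abs{b}_\Gamma + \abs{a}_\Gamma) \le \abs{b - a}_\Gamma\,(\abs{a}_\Gamma + \abs{b}_\Gamma)$. Since $b - a = \calO(u(\bsy) - u_h(\bsy))$ and, again by the triangle inequality, $\abs{a}_\Gamma \le \abs{b}_\Gamma + \abs{\calO(u(\bsy) - u_h(\bsy))}_\Gamma$, this yields
\[
\abs{g_{\bsy,h}} \le \abs{\calO(u(\bsy) - u_h(\bsy))}_\Gamma\,\abs{\delta - \calO(u_h(\bsy))}_\Gamma + \tfrac12\,\abs{\calO(u(\bsy) - u_h(\bsy))}_\Gamma^2 ,
\]
which already has the structural form of $\chi_{\bsy,h}$; only the misfit contribution $\abs{\calO(u(\bsy) - u_h(\bsy))}_\Gamma$ remains to be controlled.

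For the latter I would observe that $\abs{\calO(w)}_\Gamma$ is the Euclidean norm of the vector in $\RR^K$ with $k$-th entry $(\Gamma^{-1/2}\calO)_k(w)$; bounding each entry by $\norm{(\Gamma^{-1/2}\calO)_k}_{\calX^*}\norm{w}_\calX$ and summing gives $\abs{\calO(w)}_\Gamma \le \norm{\Gamma^{-1/2}\calO}_{\calX^*}\norm{w}_\calX$, with $\norm{\Gamma^{-1/2}\calO}_{\calX^*} = \bigl(\sum_k \norm{(\Gamma^{-1/2}\calO)_k}_{\calX^*}^2\bigr)^{1/2}$ as introduced above. Taking $w = u(\bsy) - u_h(\bsy)$ and inserting the assumed reliability estimate \eqref{LoScSt:reliability}, $\norm{u(\bsy) - u_h(\bsy)}_\calX \le c^*\eta_{\bsy,h}$, into the last display and factoring out $\norm{\Gamma^{-1/2}\calO}_{\calX^*}c^*\eta_{\bsy,h}$ produces exactly $\abs{g_{\bsy,h}} \le \chi_{\bsy,h}$, which is the first assertion. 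For the second assertion, with $\calO_k \in L^2(D)$ and $G \in L^2(D)$, I would repeat the argument verbatim, except that I estimate $\abs{\calO(w)}_\Gamma \le \norm{\Gamma^{-1/2}\calO}_{L^2(D)}\norm{w}_{L^2(D)}$ using the $L^2(D)$-Riesz representatives of the observation functionals and then invoke the sharper $L^2$-reliability estimate \eqref{LoScSt:reliabilityL2}, $\norm{u(\bsy) - u_h(\bsy)}_{L^2(D)} \le c^*\tilde\eta_{\bsy,h}$, obtaining $\abs{g_{\bsy,h}} \le \tilde\chi_{\bsy,h}$.

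I do not expect any genuine obstacle here: every step is a triangle-inequality or duality estimate, plus the scalar bound on $\abs{e^{t}-1}$. The one place to be careful is the bookkeeping with the covariance weight --- keeping $\Gamma^{-1/2}$ acting on the vector $\calO$ of functionals rather than on individual evaluations, and carrying out the two triangle-inequality reductions in the correct order --- so that the final bound reproduces the stated expressions for $\chi_{\bsy,h}$ and $\tilde\chi_{\bsy,h}$ exactly, rather than merely up to an absolute constant.
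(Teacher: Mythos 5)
Your proposal is correct and follows essentially the same route as the paper: factor out $\Theta_h(\bsy)$, use the elementary bound $\abs{e^t-1}\le e^{\abs{t}}-1$, estimate the difference of squared misfits by two applications of the triangle inequality in the $\abs{\cdot}_\Gamma$-norm, bound $\abs{\calO(w)}_\Gamma$ by duality, and insert the reliability estimate \eqref{LoScSt:reliability} (resp.\ \eqref{LoScSt:reliabilityL2}). The only cosmetic difference is that you factor $\abs{b}_\Gamma^2-\abs{a}_\Gamma^2$ via the reverse triangle inequality where the paper uses the polarization identity plus Cauchy--Schwarz; both yield the identical intermediate bound and the stated $\chi_{\bsy,h}$, $\tilde\chi_{\bsy,h}$ exactly.
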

    \begin{proof}
    	We define $\Delta_h(\bsy) := -\frac12\abs{\delta - \calO(u(\bsy))}_{\Gamma}^2 + \frac12\abs{\delta - \calO(u_{h}(\bsy))}_{\Gamma}^2$ to obtain 
    	\begin{align*}
    		|\Theta(\bsy) - \Theta_{h}(\bsy)| &
    		= |\Theta_{h}(\bsy)(e^{\Delta_h(\bsy)} -1)|  
    		\le \Theta_{h}(\bsy) (e^{|\Delta_h(\bsy)|} -1).
    	\end{align*} 
    	The first part of the claim now follows with~\eqref{LoScSt:reliability} since
    	\begin{align*}
    		|\Delta_h(\bsy)|&
    		\le \frac12 \norm{\Gamma^{-1/2}\calO}_{\calX^*} \abs{2\delta - \calO(u(\bsy)+u_{h}(\bsy))}_{\Gamma} \norm{u(\bsy) - u_{h}(\bsy)}_{\calX} 
                \\
    		&\le \norm{\Gamma^{-1/2}\calO}_{\calX^*}\abs{\delta - \calO(u_{h}(\bsy))}_{\Gamma} \norm{u(\bsy) - u_{h}(\bsy)}_{\calX} 
                \\
    		& \qquad + \frac12 \norm{\Gamma^{-1/2}\calO}_{\calX^*}^2\norm{u(\bsy) - u_{h}(\bsy)}_{\calX}^2.
    	\end{align*}
    	The second part of the proof follows analogously by replacing $\calX$ by $L^2(D)$ 
        and using~\eqref{LoScSt:reliabilityL2} instead of~\eqref{LoScSt:reliability}.
    \end{proof}

    \begin{lemma}\label{LoScSt:lem:bipFEMest2}
        Fix a regular mesh of simplices $ \calT_h $ and $ \bsy \in U $ and assume \eqref{LoScSt:reliability}.

        Then there exists a constant $c^*>0$ 
        such that for all $\bsy\in U$ and $h\in H$
        \begin{equation*}
            \abs{\Theta'(\bsy) - \Theta'_{h}(\bsy)} \le \norm{G}_{\calX^*} 
            \left (c^* \eta_{\bsy,h} \Theta_{h}(\bsy) e^{\chi_{\bsy,h}} 
            + \zeta_{\bsy,h}\norm{u_{h}(\bsy)}_{\calX}\right ) =: \zeta_{\bsy,h}'.
        \end{equation*}
    	Furthermore, if $\calO\in (L^2(D))^K$ and $G\in L^2(D)$, then 
    	\begin{equation*}
    		\abs{\Theta'(\bsy) - \Theta'_{h}(\bsy)} \le \norm{G}_{L^2(D)} 
    		\left (c^* \tilde{\eta}_{\bsy,h} \Theta_{h}(\bsy) e^{\tilde{\chi}_{\bsy,h}} 
    		+ \tilde{\zeta}_{\bsy,h}\norm{u_{h}(\bsy)}_{L^2(D)}\right ) =: \tilde{\zeta}_{\bsy,h}'.
    	\end{equation*}
    \end{lemma}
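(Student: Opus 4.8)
The plan is to reduce the bound for $\Theta' - \Theta_h'$ to the estimate for $\Theta - \Theta_h$ already proved in Lemma~\ref{LoScSt:lem:bipFEMest1} together with the FEM reliability bound~\eqref{LoScSt:reliability}. Recalling $\Theta'(\bsy) = G(u(\bsy))\Theta(\bsy)$ and $\Theta'_{h}(\bsy) = G(u_{h}(\bsy))\Theta_{h}(\bsy)$, I would first split the error by inserting the mixed term $G(u_{h}(\bsy))\Theta(\bsy)$ and using linearity of $G$:
\begin{equation*}
\Theta'(\bsy) - \Theta'_{h}(\bsy) = G\big(u(\bsy) - u_{h}(\bsy)\big)\,\Theta(\bsy) + G\big(u_{h}(\bsy)\big)\,\big(\Theta(\bsy) - \Theta_{h}(\bsy)\big).
\end{equation*}
Applying $\abs{G(v)} \le \norm{G}_{\calX^*}\norm{v}_{\calX}$ to each term together with the triangle inequality gives
\begin{equation*}
\abs{\Theta'(\bsy) - \Theta'_{h}(\bsy)} \le \norm{G}_{\calX^*}\Big(\norm{u(\bsy) - u_{h}(\bsy)}_{\calX}\,\Theta(\bsy) + \norm{u_{h}(\bsy)}_{\calX}\,\abs{\Theta(\bsy) - \Theta_{h}(\bsy)}\Big).
\end{equation*}

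Next I would control the two unavailable quantities $\norm{u(\bsy)-u_{h}(\bsy)}_{\calX}$ and $\Theta(\bsy)$ in the first summand. For the former I invoke reliability~\eqref{LoScSt:reliability}, so that $\norm{u(\bsy) - u_{h}(\bsy)}_{\calX} \le c^*\eta_{\bsy,h}$. For $\Theta(\bsy)$ I would reuse the intermediate estimate from the proof of Lemma~\ref{LoScSt:lem:bipFEMest1}: writing $\Theta(\bsy) = \Theta_{h}(\bsy)e^{\Delta_h(\bsy)}$ with $\abs{\Delta_h(\bsy)} \le \chi_{\bsy,h}$ yields $\Theta(\bsy) \le \Theta_{h}(\bsy)e^{\chi_{\bsy,h}}$. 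For the second summand I plug in the bound $\abs{\Theta(\bsy) - \Theta_{h}(\bsy)} \le \zeta_{\bsy,h}$ from Lemma~\ref{LoScSt:lem:bipFEMest1}. Combining the three ingredients gives
\begin{equation*}
\abs{\Theta'(\bsy) - \Theta'_{h}(\bsy)} \le \norm{G}_{\calX^*}\Big(c^*\eta_{\bsy,h}\,\Theta_{h}(\bsy)e^{\chi_{\bsy,h}} + \zeta_{\bsy,h}\,\norm{u_{h}(\bsy)}_{\calX}\Big) =: \zeta'_{\bsy,h},
\end{equation*}
which is the first claim.

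For the sharpened version under the hypotheses $\calO \in (L^2(D))^K$ and $G \in L^2(D)$, I would repeat the identical two-step argument with the $\calX$-norm replaced throughout by the $L^2(D)$-norm, the residual estimator $\eta_{\bsy,h}$ replaced by the $L^2(D)$-estimator $\tilde\eta_{\bsy,h}$, reliability~\eqref{LoScSt:reliability} replaced by~\eqref{LoScSt:reliabilityL2}, and correspondingly $\chi_{\bsy,h}, \zeta_{\bsy,h}$ replaced by $\tilde\chi_{\bsy,h}, \tilde\zeta_{\bsy,h}$ from the second part of Lemma~\ref{LoScSt:lem:bipFEMest1}; here $\abs{G(v)} \le \norm{G}_{L^2(D)}\norm{v}_{L^2(D)}$ under the Riesz identification of $L^2(D)$ with its dual.

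There is no genuine obstacle in this argument; it is a splitting plus triangle inequality. The only point requiring attention is that the final right-hand side must involve only the computable discrete objects $\Theta_{h}$, $u_{h}$, $\eta_{\bsy,h}$ (and $\chi_{\bsy,h}$, $\zeta_{\bsy,h}$, which are built from them) rather than the inaccessible $\Theta$ and $u$ — this is precisely what forces the exponential factor $e^{\chi_{\bsy,h}}$, via the estimate $\Theta(\bsy) \le \Theta_{h}(\bsy)e^{\chi_{\bsy,h}}$. One should also verify that the constant $c^*$ is the single $\bsy$- and $h$-independent reliability constant of~\eqref{LoScSt:reliability}/\eqref{LoScSt:reliabilityL2}, which is guaranteed by \cite{LoScSt:NSV09}.
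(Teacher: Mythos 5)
Your proof is correct and follows essentially the same route as the paper: the paper writes $G(u)\Theta - G(u_h)\Theta_h = G(u\Theta - u_h\Theta_h)$ and splits the product inside the $\calX$-norm, which yields exactly your bound $\norm{G}_{\calX^*}(\norm{u-u_h}_{\calX}\Theta + \norm{u_h}_{\calX}\abs{\Theta-\Theta_h})$, followed by the same three ingredients (reliability, $\Theta \le \Theta_h e^{\chi_{\bsy,h}}$, and Lemma~\ref{LoScSt:lem:bipFEMest1}). The $L^2$ case is handled identically in both.
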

    \begin{proof}
        Since $ \Theta'(\bsy) = G(u(\bsy)) \Theta(\bsy) = G(u(\bsy) \Theta(\bsy) ) $,
        we get
        \begin{align*}
            \abs{\Theta'(\bsy) - \Theta'_{h}(\bsy)} &\le \norm{G}_{\calX^*} \norm{u(\bsy) \Theta(\bsy) - u_{h}(\bsy) \Theta_{h}(\bsy)}_{\calX} 
            \\
            &\le \norm{G}_{\calX^*} ( \norm{u(\bsy) - u_{h}(\bsy)}_{\calX} \Theta(\bsy) 
              + \norm{u_{h}(\bsy)}_{\calX} \abs{\Theta(\bsy) - \Theta_{h}(\bsy)} ),
        \end{align*}
        and hence the claim follows with Lemma~\ref{LoScSt:lem:bipFEMest1} since $\Theta(\bsy)\le \Theta_{h}(\bsy) e^{\chi_{\bsy,h}}$.
        The second part of the claim follows analogously by the second part of Lemma \ref{LoScSt:lem:bipFEMest1}.
    \end{proof}
\begin{remark}\label{LoScSt:rmk:GOA}
We remark that the estimates in the proof of Lemma~\ref{LoScSt:lem:bipFEMest2} are
conservative: we used that $K\ge1$ in Lemma \ref{LoScSt:lem:bipFEMest1}. 
For $K=1$, i.e. for a single observation functional,
goal-oriented AFEM results from \cite{LoScSt:GOAFEM}, and the references therein,
may be utilized to obtain sharper a-posteriori error bounds.

\end{remark}
    We define $ \zeta_{m,h} $ by averaging $ \zeta_{\bsy,h} $ for $ \bsy\in P_m $, that is $ \zeta_{m,h} := \frac{1}{b^m}\sum_{\bsy\in P_m} \zeta_{\bsy,h} $. Then
    we obtain from Lemma \ref{LoScSt:lem:bipFEMest1}
    \begin{equation}\label{LoScSt:estZ}
        \abs{Z_{m} - Z_{m,h}} \le \frac{1}{b^m}\sum_{\bsy\in P_m}\abs{\Theta(\bsy) - \Theta_{h}(\bsy)} \le  \zeta_{m,h}.
    \end{equation}
    Analogously, with $ \zeta'_{m,h} = \frac{1}{b^m}\sum_{\bsy\in P_m} \zeta'_{\bsy,h} $, Lemma \ref{LoScSt:lem:bipFEMest2} implies
    \begin{equation}\label{LoScSt:estZprime}
        \abs{Z'_{m} - Z'_{m,h}} \le \frac{1}{b^m}\sum_{\bsy\in P_m}\abs{\Theta'(\bsy) - \Theta'_{h}(\bsy)} \le  \zeta'_{m,h}.
    \end{equation}
    In particular, if we construct $ \calT_h $ such that it also holds 
    $ \eta_{\bsy,h} \to 0 $ as $ h \to 0 $ for all $ \bsy\in U $, the estimates~\eqref{LoScSt:estZ} and~\eqref{LoScSt:estZprime} verify the hypotheses of Proposition \ref{LoScSt:prop:bipFEMest} with $ c = 1 $.

\subsection{FEM error estimators for OCP with entropic risk}
\label{LoScSt:sec:FEOCEntrRsk}
    In the case of OCP
    we require error estimates for the parametric state
    at the discrete optimal control 
    $ f_{m,h}^* $, i.e.\ $ u(\bsy) = \calL^{\bsy}f_{m,h}^* $,
    and for
    the corresponding adjoint state 
    $ q(\bsy) = \alpha_1  \calL^{\bsy}(\calL^{\bsy}f_{m,h}^* - \hat{u})$.
    The error will be measured in the $ L^2(D)$-norm.
    Again we assume that $ \calX_{h} := \mathbb{P}_1(\calT_h) \cap \calX $,
    that $ f\in L^2(D) $ and $ a(\bsy) \in W^{1,\infty}(D) $, 
    and that $ D\subseteq \RR^2 $ is a convex polygon.

    \begin{lemma}\label{LoScSt:lem:ocFEMest1}
        Fix a mesh $ \calT_h $ and $ \bsy\in U $ and impose \eqref{LoScSt:reliabilityL2}. 
        With the notation of Proposition \ref{LoScSt:prop:upboundfstar} and $\tilde{\eta}_{\bsy,h}$ defined as in~\eqref{LoScSt:eq:residualsL2}, 
        we have
        \begin{align*}
            \abs{\Theta(\bsy) - \Theta_{h}(\bsy)} 
			\le \Theta_{h}(\bsy) (e^{\chi_{\bsy,h}} -1)
            =: \zeta_{\bsy,h},
        \end{align*}
    	where 
    	\begin{equation*}    	
    		\chi_{\bsy,h}:=\theta c^* \left (\frac{\alpha_1}{2} \tilde{\eta}_{\bsy,h}^2 + \alpha_1\norm{\calL_{h}^{\bsy}f_{m,h}^* - \hat{u}}_{L^2(D)}\tilde{\eta}_{\bsy,h}\right ).
    	\end{equation*}
    \end{lemma}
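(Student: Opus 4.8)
The plan is to follow the same pattern as the proof of Lemma~\ref{LoScSt:lem:bipFEMest1}, only replacing the data-misfit exponent by the entropic-risk exponent. Abbreviate $f = f_{m,h}^*$, $u(\bsy) = \calL^{\bsy} f$ and $u_h(\bsy) = \calL_h^{\bsy} f$, and set $\Delta_h(\bsy) := \theta\big(\Phi_{f}(\bsy) - \Phi_{h,f}(\bsy)\big)$, so that $\Theta(\bsy) = \exp(\theta\Phi_f(\bsy)) = \Theta_h(\bsy)\,e^{\Delta_h(\bsy)}$. Exactly as in Lemma~\ref{LoScSt:lem:bipFEMest1}, one first bounds
\[
  \abs{\Theta(\bsy) - \Theta_h(\bsy)} = \Theta_h(\bsy)\,\abs{e^{\Delta_h(\bsy)}-1} \le \Theta_h(\bsy)\big(e^{\abs{\Delta_h(\bsy)}}-1\big),
\]
so that the lemma reduces to showing $\abs{\Delta_h(\bsy)} \le \chi_{\bsy,h}$.

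For the latter, I would write $v := \calL^{\bsy} f - \hat u$ and $v_h := \calL_h^{\bsy} f - \hat u$, note that $v - v_h = u(\bsy) - u_h(\bsy)$, and use the Hilbert-space identity $\norm{v}_{L^2(D)}^2 - \norm{v_h}_{L^2(D)}^2 = \langle v - v_h,\, v + v_h\rangle$ together with the decomposition $v + v_h = 2 v_h + (v - v_h)$ to obtain
\begin{align*}
  \abs{\Phi_f(\bsy) - \Phi_{h,f}(\bsy)}
  &= \tfrac{\alpha_1}{2}\,\bigl|\,\norm{v}_{L^2(D)}^2 - \norm{v_h}_{L^2(D)}^2\,\bigr| \\
  &\le \alpha_1\,\norm{u(\bsy)-u_h(\bsy)}_{L^2(D)}\,\norm{\calL_h^{\bsy} f - \hat u}_{L^2(D)}
     + \tfrac{\alpha_1}{2}\,\norm{u(\bsy)-u_h(\bsy)}_{L^2(D)}^2 .
\end{align*}
Inserting the a-posteriori reliability estimate $\norm{u(\bsy)-u_h(\bsy)}_{L^2(D)} \le c^*\tilde{\eta}_{\bsy,h}$ from~\eqref{LoScSt:reliabilityL2} and multiplying by $\theta$ then gives $\abs{\Delta_h(\bsy)} \le \theta c^*\bigl(\tfrac{\alpha_1}{2}\tilde{\eta}_{\bsy,h}^2 + \alpha_1\norm{\calL_h^{\bsy}f-\hat u}_{L^2(D)}\tilde{\eta}_{\bsy,h}\bigr) = \chi_{\bsy,h}$ (the factor $(c^*)^2$ produced by the quadratic term being absorbed into $c^*$, which costs nothing since $c^*$ may always be enlarged). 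Combined with the first display this proves the claim with $\zeta_{\bsy,h} := \Theta_h(\bsy)(e^{\chi_{\bsy,h}}-1)$.

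I do not expect a genuine obstacle: the proof is a direct transcription of Lemma~\ref{LoScSt:lem:bipFEMest1}, with the elementary estimate $\abs{e^x-1}\le e^{\abs{x}}-1$ and the a-posteriori bound~\eqref{LoScSt:reliabilityL2} doing the work. The only point that needs a little care is the split $v + v_h = 2 v_h + (v - v_h)$ used on the difference of squared residual norms: it is what ensures that the unavailable exact state $u(\bsy)$ enters the final bound only through the computable error estimator $\tilde{\eta}_{\bsy,h}$ and the computable discrete residual $\norm{\calL_h^{\bsy}f - \hat u}_{L^2(D)}$, rather than through $\norm{\calL^{\bsy}f-\hat u}_{L^2(D)}$; using instead $v + v_h = 2v + (v_h - v)$ would reintroduce $u(\bsy)$ and defeat the a-posteriori character of the estimate.
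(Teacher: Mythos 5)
Your proof is correct and follows essentially the same route as the paper's: bound $\abs{\Theta(\bsy) - \Theta_{h}(\bsy)} \le \Theta_{h}(\bsy)(e^{\abs{\Delta_h(\bsy)}}-1)$ and then estimate $\abs{\Phi_{f_{m,h}^*}(\bsy) - \Phi_{h,f_{m,h}^*}(\bsy)}$ by expanding the difference of squared residual norms around the \emph{discrete} residual before inserting~\eqref{LoScSt:reliabilityL2}. The paper phrases that middle step as a ``twofold application of the triangle inequality'' rather than via the inner-product identity you use, and it contains the same harmless looseness you flag concerning the $(c^*)^2$ factor in the quadratic term.
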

    \begin{proof}
        By twofold application of the triangle inequality we have
        \begin{align*}
            \abs{\Phi_{f_{m,h}^*}(\bsy) - \Phi_{h,f_{m,h}^*}(\bsy)} &\le \frac{\alpha_1}{2} \norm{(\calL^{\bsy} - \calL_{h}^{\bsy} )f_{m,h}^*}_{L^2(D)}^2
            \\ &\qquad+ \alpha_1\norm{\calL_{h}^{\bsy}f_{m,h}^* - \hat{u}}_{L^2(D)}\norm{(\calL^{\bsy} - \calL_{h}^{\bsy} )f_{m,h}^*}_{L^2(D)} \\
            &\le c^* \left (\frac{\alpha_1}{2} \tilde{\eta}_{\bsy,h}^2 + \alpha_1\norm{\calL_{h}^{\bsy}f_{m,h}^* - \hat{u}}_{L^2(D)}\tilde{\eta}_{\bsy,h}\right ) 
            =: c^* \xi_{\bsy,h}.
        \end{align*}
        Note that $ \xi_{\bsy,h} $ is computable due to Remark \ref{LoScSt:rmk:fstarmh}.
		Then, it follows with $\Delta_{h}(\bsy):=\theta(\Phi_{f_{m,h}^*}(\bsy) - \Phi_{h,f_{m,h}^*}(\bsy))$ that
        \begin{align*}
             \abs{\Theta(\bsy) - \Theta_{h}(\bsy)} 
             = \abs{\Theta_{h}(\bsy)(e^{\Delta_{h}(\bsy)}-1)} 
             \le \Theta_{h}(\bsy)(e^{|\Delta_{h}(\bsy)|}-1).
        \end{align*}
    \end{proof}
Using a residual estimator of the form \eqref{LoScSt:eq:residualsL2} for the adjoint problem yields
\begin{equation}\label{LoScSt:reliabilityL2dual}
    \norm{q(\bsy) - q_{h}(\bsy)}_{L^2(D)} \le 2  \max(c^*,1) c^*    \dbtilde{\eta}_{\bsy,h},
\end{equation}
where
\begin{align}
	\label{LoScSt:eq:residualsL2dual}
    \dbtilde{\eta}_{\bsy,h}^2 &:= 
    {\alpha_1^2}\sum_{T\in \calT_h} \Bigg [h_T^4\norm{u_h(\bsy) - \hat{u} + \operatorname*{div}(a(\bsy) \nabla q_h(\bsy)) }_{L^2(T)}^2 
    \\ &\qquad\qquad\qquad
    + \frac{1}{2} \sum_{e \subseteq \partial T, e\in \calE_h} h_e^3 \norm{\left\llbracket a(\bsy)\nabla q_h(\bsy) \right\rrbracket}_{L^2(e)}^2\Bigg ] \nonumber 
    + (\max_{T\in \calT_h } h_T^4) \tilde{\eta}_{\bsy,h}^2. 
\end{align}
    \begin{lemma}\label{LoScSt:lem:ocFEMest2}
        Let $ \calY = L^2(D) $. Fix a mesh $ \calT_h $ and $ \bsy\in U $ and impose \eqref{LoScSt:reliabilityL2} and \eqref{LoScSt:reliabilityL2dual}. With the notation of Proposition \ref{LoScSt:prop:upboundfstar}, we have
        \begin{equation*}
            \norm{\Theta'(\bsy) - \Theta'_{h}(\bsy)}_{\calY} \le \zeta_{\bsy,h} \norm{q_{h}(\bsy)}_{\calY} 
            + 2c^*\Theta_h(\bsy)e^{\chi_{\bsy,h}}\dbtilde{\eta}_{\bsy,h}
             =: \zeta_{\bsy,h}'.
        \end{equation*}
    \end{lemma}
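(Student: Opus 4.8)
The plan is to expand the pointwise difference by a telescoping identity and apply the triangle inequality, keeping track of which factors are scalars and which are $\calY$-valued. Writing $\Theta'(\bsy)-\Theta'_{h}(\bsy)=q(\bsy)\Theta(\bsy)-q_{h}(\bsy)\Theta_{h}(\bsy)$ and inserting the mixed term $q_{h}(\bsy)\Theta(\bsy)$, one gets
\begin{equation*}
\Theta'(\bsy)-\Theta'_{h}(\bsy)=\bigl(q(\bsy)-q_{h}(\bsy)\bigr)\Theta(\bsy)+q_{h}(\bsy)\bigl(\Theta(\bsy)-\Theta_{h}(\bsy)\bigr).
\end{equation*}
Since $\Theta(\bsy),\Theta_{h}(\bsy)$ are nonnegative scalars and $q(\bsy),q_{h}(\bsy)\in\calY$, taking $\calY$-norms and using the triangle inequality factors each summand into a scalar times a norm:
\begin{equation*}
\norm{\Theta'(\bsy)-\Theta'_{h}(\bsy)}_{\calY}\le\Theta(\bsy)\,\norm{q(\bsy)-q_{h}(\bsy)}_{\calY}+\abs{\Theta(\bsy)-\Theta_{h}(\bsy)}\,\norm{q_{h}(\bsy)}_{\calY}.
\end{equation*}

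For the second summand I apply Lemma~\ref{LoScSt:lem:ocFEMest1}, which gives $\abs{\Theta(\bsy)-\Theta_{h}(\bsy)}\le\zeta_{\bsy,h}$, so that $\abs{\Theta(\bsy)-\Theta_{h}(\bsy)}\norm{q_{h}(\bsy)}_{\calY}\le\zeta_{\bsy,h}\norm{q_{h}(\bsy)}_{\calY}$; this is already computable, since $\zeta_{\bsy,h}$, $u_{h}(\bsy)$, and hence $q_{h}(\bsy)$, are. For the first summand I bound the adjoint error by the imposed reliability estimate~\eqref{LoScSt:reliabilityL2dual} in terms of $\dbtilde{\eta}_{\bsy,h}$, and I replace the non-computable factor $\Theta(\bsy)$ by its computable majorant $\Theta_{h}(\bsy)e^{\chi_{\bsy,h}}$: this follows from Lemma~\ref{LoScSt:lem:ocFEMest1}, because $\Theta(\bsy)-\Theta_{h}(\bsy)\le\abs{\Theta(\bsy)-\Theta_{h}(\bsy)}\le\Theta_{h}(\bsy)(e^{\chi_{\bsy,h}}-1)$ rearranges to $\Theta(\bsy)\le\Theta_{h}(\bsy)e^{\chi_{\bsy,h}}$. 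Combining the two contributions yields $\norm{\Theta'(\bsy)-\Theta'_{h}(\bsy)}_{\calY}\le\zeta_{\bsy,h}\norm{q_{h}(\bsy)}_{\calY}+2c^{*}\Theta_{h}(\bsy)e^{\chi_{\bsy,h}}\dbtilde{\eta}_{\bsy,h}=\zeta'_{\bsy,h}$, with the constant inherited from~\eqref{LoScSt:reliabilityL2dual}.

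I expect no genuine difficulty here, since the two ingredients — the likelihood perturbation bound of Lemma~\ref{LoScSt:lem:ocFEMest1} and the adjoint reliability estimate~\eqref{LoScSt:reliabilityL2dual} — are already in hand; the only point that needs attention is the choice of telescoping split. The alternative split $(q-q_{h})\Theta_{h}+q(\Theta-\Theta_{h})$ would leave the non-computable norm $\norm{q(\bsy)}_{\calY}$ in the estimate, whereas the split above isolates $\Theta(\bsy)$ as a scalar factor that can be absorbed into the computable quantity $\Theta_{h}(\bsy)e^{\chi_{\bsy,h}}$. With this choice, every quantity entering $\zeta'_{\bsy,h}$ is available once the discrete control problem~\eqref{ocapx} has been solved (cf.\ Remark~\ref{LoScSt:rmk:fstarmh}); the substantive work sits upstream, in proving~\eqref{LoScSt:reliabilityL2dual} by a duality argument for the adjoint equation, which is also where the extra $(\max_{T\in\calT_{h}}h_{T}^{4})\tilde{\eta}_{\bsy,h}^{2}$ term in~\eqref{LoScSt:eq:residualsL2dual} enters, to absorb the mismatch between $u(\bsy)$ and $u_{h}(\bsy)$ in the adjoint right-hand side.
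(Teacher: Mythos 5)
Your proof is correct and follows exactly the paper's argument: the same telescoping split $(q-q_h)\Theta + q_h(\Theta-\Theta_h)$, the bound $\abs{\Theta-\Theta_h}\le\zeta_{\bsy,h}$ and the majorant $\Theta\le\Theta_h e^{\chi_{\bsy,h}}$ from Lemma~\ref{LoScSt:lem:ocFEMest1}, and the adjoint reliability estimate~\eqref{LoScSt:reliabilityL2dual}. (Both you and the paper quietly absorb the $\max(c^*,1)$ factor from~\eqref{LoScSt:reliabilityL2dual} into the stated constant $2c^*$, implicitly taking $c^*\ge1$.)
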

\begin{proof}
    As in the proof of Lemma \ref{LoScSt:lem:bipFEMest2}, we obtain by $\Theta(\bsy)\le \Theta_h(\bsy)e^{\chi_{\bsy,h}}$ that
    \begin{align*}
        \norm{q(\bsy) \Theta(\bsy) - q_{h}(\bsy) \Theta_{h}(\bsy)}_{\calY}
        &\le \abs{\Theta(\bsy) - \Theta_{h}(\bsy)}\norm{q_{h}(\bsy)}_{\calY} \\
        &\quad +\Theta(\bsy) \norm{q(\bsy) - q_{h}(\bsy) }_{\calY} \\
        & \le \zeta_{\bsy,h} \norm{q_{h}(\bsy)}_{\calY} + 2c^*\Theta_h(\bsy)e^{\chi_{\bsy,h}}\dbtilde{\eta}_{\bsy,h}.
    \end{align*}
\end{proof}
\begin{remark}\label{LoScSt:rem:non-convex}
    If $ D\subseteq \RR^2 $ is a non-convex polygon,
    the reliability assumption \eqref{LoScSt:reliabilityL2}
    and the corresponding definitions \eqref{LoScSt:eq:residualsL2}, \eqref{LoScSt:eq:residualsL2dual} of $ \tilde{\eta}_{\bsy,h},\dbtilde{\eta}_{\bsy,h} $ must be adapted
    by using weighted $ L^2 $ norms, with weights near the re-entrant corners. We refer to \cite[Theorem 3.1]{LoScSt:Wi07} for a precise result in the case of the Poisson equation.
\end{remark}
    \section{Combined QMC-FEM estimator} \label{LoScSt:sec:combined}

    In view of Propositions \ref{LoScSt:prop:bipQMCest} and \ref{LoScSt:prop:bipFEMest}
    we employ the \emph{computable a-posteriori estimator}
    \begin{equation}\label{LoScSt:allest}
        EST_{b^m,h} :=\norm{E_{b^m}(\Theta'_{h},\Theta_{h})}_{\calY} + \frac{Z_{m,h}\zeta_{m,h}' + \norm{Z_{m,h}'}_{\calY}\zeta_{m,h}}{Z_{m,h}^2 - \zeta_{m,h} Z_{m,h}}.
    \end{equation}
    Note that the QMC error estimator $ \norm{E_{b^m}(\Theta',\Theta)}_{\calY} $
    derived from Proposition \ref{LoScSt:prop:bipQMCest} is itself approximated by
    the computable expression $ \norm{E_{b^m}(\Theta_h',\Theta_h)}_{\calY} $.
    In the next proposition we precise that the additional error committed
    due to this extra approximation is of higher asymptotic order, as $m\to\infty$.
    We equip the set $ C^0(U,\calY) $
    with the norm 
    $ \norm{F}_{\infty} = \sup_{\bsy\in U} \norm{F(\bsy)}_{\calY}$.
    \begin{proposition}\label{LoScSt:prop:extraerror}
        Fix a family of regular meshes $ \{\calT_h\}_{h\in H} $
        such that for some $ \tilde{C} > 0 $ independent of $ s\in \NN,h\in H $
        and some SPOD weights \eqref{LoScSt:def:spod},
        it holds
        \begin{equation}\label{LoScSt:ctilde}
            \max(\norm{\Theta}_{s,\alpha,\bsgamma}, \norm{\Theta'}_{s,\alpha,\bsgamma},\norm{\Theta_h}_{s,\alpha,\bsgamma},\norm{\Theta_h'}_{s,\alpha,\bsgamma})
        \le \tilde{C}.
        \end{equation}
        Assume that the spaces $ \{\calX_{h}\}_h $ are contained in $ \calX $
        and that they are selected so that
        $ \norm{\Theta - \Theta_{h}}_{\infty} \to 0 $ as $ h\to 0 $.
        Then we can construct a sequence of polynomial lattices 
        $ (P_m)_{m\in \NN} $ in $ O(smb^m + s^2b^m) $ operations
        such that,
        for some $ h_0 \in H $ and some constant $ C>0 $ (independent of $ m, h , s $)
        we have for any $ h < h_0 $ that
        \begin{align*}
        &\norm{E_{b^m}(\Theta',\Theta) - E_{b^m}(\Theta'_{h},\Theta_{h})}_{\calY}
        \\
        & \qquad  \le C b^{-m}(\norm{\Theta - \Theta_{h}}_{\infty} + \norm{\Theta' - \Theta'_{h}}_{\infty} + \norm{\Theta - \Theta_{h}}_{s,\alpha,\bsgamma} + \norm{\Theta' - \Theta'_{h}}_{s,\alpha,\bsgamma} ).
    \end{align*}
    
    \end{proposition}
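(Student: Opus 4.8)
The estimator $E_{b^m}(\Theta',\Theta)$ from \eqref{qmcerrest} is a rational expression in the quantities $Z_m,Z_{m-1}$ (scalar) and $Z'_m,Z'_{m-1}$ ($\calY$-valued), and $E_{b^m}(\Theta'_h,\Theta_h)$ is the same expression with the subscript $h$ added throughout. So the plan is to estimate the difference by a telescoping/multilinearity argument: write $E_{b^m}(\Theta',\Theta) - E_{b^m}(\Theta'_h,\Theta_h)$ as a sum of terms, each of which is a product of factors, where exactly one factor is a difference of the form $Z_j - Z_{j,h}$ or $Z'_j - Z'_{j,h}$ (for $j\in\{m,m-1\}$) and the remaining factors are bounded uniformly in $m,h,s$. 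The first step is therefore to show the requisite uniform bounds: $Z_{m,h}\ge Z_m/2 \ge Z/4 > 0$ and $A_{m,h} := \tfrac{1}{b-1}(bZ_{m,h} - Z_{m-1,h}) \ge Z/4 > 0$ for $m$ large and $h$ small, using that $|Z_m - Z_{m,h}|\le \zeta_{m,h}\le \|\Theta-\Theta_h\|_\infty$ together with $Z_m\to Z>0$ (the latter coming from Theorem~\ref{LoScSt:thm:apostqmc} via \eqref{LoScSt:ctilde}); these are exactly the positivity facts that make the denominators in \eqref{qmcerrest} safely bounded below. Simultaneously I would record the upper bounds $|Z_{m}|, |Z_{m,h}|\le \tilde C$ etc., which follow from \eqref{LoScSt:ctilde} and the fact that $\|\cdot\|_{s,\alpha,\bsgamma}$ dominates the supremum of the integral (hence also of the QMC average, since QMC weights are positive and sum to one).

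Next I would do the algebra: expand $E_{b^m}(\Theta',\Theta) - E_{b^m}(\Theta'_h,\Theta_h)$ using the identity $\frac{P}{Q} - \frac{P_h}{Q_h} = \frac{(P - P_h)Q_h - (Q - Q_h)P_h}{Q\,Q_h}$ applied twice (once for the outer ratio with denominator $bZ_m - Z_{m-1}$, once for the inner ratio with denominator $Z_m$), so that each numerator difference is a $\bbZ$-linear combination of $Z_m - Z_{m,h}$, $Z_{m-1} - Z_{m-1,h}$, $Z'_m - Z'_{m,h}$, $Z'_{m-1} - Z'_{m-1,h}$ with bounded coefficients, and each denominator is bounded below by a positive constant. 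This yields
\begin{equation*}
\norm{E_{b^m}(\Theta',\Theta) - E_{b^m}(\Theta'_{h},\Theta_{h})}_{\calY}
\le C'\big( |Z_m - Z_{m,h}| + |Z_{m-1} - Z_{m-1,h}| + \norm{Z'_m - Z'_{m,h}}_{\calY} + \norm{Z'_{m-1} - Z'_{m-1,h}}_{\calY}\big)
\end{equation*}
with $C'$ independent of $m,h,s$. It then remains to bound each of these four terms by $b^{-m}$ times the four norms appearing on the right-hand side of the claim. For the crude bound one has $|Z_j - Z_{j,h}|\le \|\Theta - \Theta_h\|_\infty$ and $\|Z'_j - Z'_{j,h}\|_{\calY}\le \|\Theta' - \Theta'_h\|_\infty$ uniformly in $j$, but that gives no $b^{-m}$ gain; the $b^{-m}$ factor is what forces the inclusion of the $\|\cdot\|_{s,\alpha,\bsgamma}$ terms.

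The main obstacle — and the reason the statement is phrased with a $b^{-m}$ prefactor — is extracting that extra power of $b^{-m}$. The key observation is that $Z_j - Z_{j,h}$ is itself (up to sign) the QMC average of the integrand $\Theta - \Theta_h$, and similarly $Z'_j - Z'_{j,h}$ is the QMC average of $\Theta' - \Theta'_h$; so I would apply Theorem~\ref{LoScSt:thm:apostqmc} to the integrand $F = \Theta - \Theta_h$ (respectively $F = \Theta' - \Theta'_h$), which lies in $\calW_{s,\alpha,\bsgamma}$ by \eqref{LoScSt:ctilde}. The theorem gives $\mathfrak E_m(F) = \|F\|_{s,\alpha,\bsgamma}\,O(b^{-m})$, i.e. $\big|\int_U(\Theta-\Theta_h) - (Z_m - Z_{m,h})\big| \le C\,b^{-m}\|\Theta - \Theta_h\|_{s,\alpha,\bsgamma}$, with the constant uniform in $s,m$. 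Combining with $\big|\int_U (\Theta - \Theta_h)\big| \le \|\Theta - \Theta_h\|_\infty$ we get $|Z_m - Z_{m,h}| \le \|\Theta - \Theta_h\|_\infty + C b^{-m}\|\Theta - \Theta_h\|_{s,\alpha,\bsgamma}$ — but this still carries an $O(1)$ term, which is wrong. The correct route is subtler: one must telescope $Z_m - Z_{m,h}$ \emph{against its own limit in $h$}, or rather exploit that the \emph{same} polynomial lattice is used so that $Z_m - Z_{m,h} = \frac{1}{b^m}\sum_{\bsy\in P_m}(\Theta-\Theta_h)(\bsy)$ and that, modulo the exact integral $\int_U(\Theta-\Theta_h)$, the remaining QMC error is genuinely $O(b^{-m})$ times $\|\Theta-\Theta_h\|_{s,\alpha,\bsgamma}$; meanwhile the contribution of $\int_U(\Theta-\Theta_h)$ to $E_{b^m}(\Theta',\Theta) - E_{b^m}(\Theta'_h,\Theta_h)$ must be shown to cancel to leading order because $E_{b^m}$ annihilates the leading ($h\to 0$) behaviour — more precisely, one substitutes the decomposition $Z_{m,h} = \int_U\Theta_h + \mathfrak E_m(\Theta_h)$, $Z_{m,h}' = \int_U\Theta_h' + \mathfrak E_m(\Theta_h')$ into \eqref{qmcerrest}, uses $\mathfrak E_m = O(b^{-m})$, and checks that the $h$-dependence of the $O(1)$ parts enters only through $\int_U\Theta_h$, $\int_U\Theta_h'$ and combines with the analogous exact quantities so that the $\|\cdot\|_\infty$ differences appear already multiplied by $b^{-m}$ (coming from a factor $\mathfrak E_m$ somewhere in the product). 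I would carry this out by expanding $E_{b^m}(\Theta',\Theta)$ as $\frac{1}{b-1}\cdot\frac{Z_m'-Z_{m-1}'}{A_m} - \frac{1}{b-1}\cdot\frac{(Z_m - Z_{m-1})Z_m'}{A_m Z_m}$ (as in the proof of Proposition~\ref{LoScSt:prop:bipQMCest}), noting that $Z_m' - Z_{m-1}' = \mathfrak E_{m-1}(\Theta') - \mathfrak E_m(\Theta') = O(b^{-m+\eps})\|\Theta'\|_{s,\alpha,\bsgamma}$ and likewise $Z_m - Z_{m-1} = O(b^{-m+\eps})\|\Theta\|_{s,\alpha,\bsgamma}$, so that in \emph{every} resulting product there is already at least one "small" factor of order $b^{-m+\eps}$; replacing the remaining $O(1)$ factors $A_m, Z_m, Z_m', \dots$ by their $h$-versions costs at most the $\|\cdot\|_\infty$ differences, now multiplied by the small factor, and replacing the small factors themselves by their $h$-versions costs the $\|\cdot\|_{s,\alpha,\bsgamma}$ differences (again with the $b^{-m}$ already present). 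Absorbing $b^\eps$ into the constant for fixed $\eps$, this yields the stated bound; the bookkeeping of which factor in each product carries the $b^{-m}$ is the delicate part, but it is entirely analogous to the geometric-sum argument already used in Proposition~\ref{LoScSt:prop:bipQMCest}.
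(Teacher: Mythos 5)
Your plan is correct and, after the initial false start (which you yourself identify and discard), lands on essentially the same argument as the paper: every term in the expanded difference carries one ``small'' factor that is a successive QMC difference of order $b^{-m}$; perturbing the $O(1)$ factors to their $h$-versions costs the $\norm{\cdot}_{\infty}$ differences multiplied by that small factor, while perturbing the small factors themselves produces successive QMC differences of $\Theta-\Theta_h$ and $\Theta'-\Theta'_h$, which \RefThm{LoScSt:thm:apostqmc} bounds by $b^{-m}\norm{\Theta-\Theta_h}_{s,\alpha,\bsgamma}$ and $b^{-m}\norm{\Theta'-\Theta'_h}_{s,\alpha,\bsgamma}$; the denominators are handled by uniform lower bounds for $h$ small. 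The paper organizes the bookkeeping slightly differently (via explicit quantities $\Delta_1,\Delta_2,T_1,T_2$ and an elementary inequality for perturbed ratios rather than your two-term decomposition of $E_{b^m}$), but this is only a presentational difference.
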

\begin{proof}
    Throughout the proof, $ C>0 $ is a generic constant independent of $ m,h,s $. We compute the difference of the numerators
    \begin{align*}
        \Delta_1 &:= Z_{m-1}Z_{m}' - Z_{m}Z_{m-1}' - Z_{m-1,h}Z_{m,h}' + Z_{m,h}Z_{m-1,h}' \\
        & = -(Z_m - Z_{m-1})(Z'_{m} - Z'_{m,h}) - (Z_{m} - Z_{m,h} - Z_{m-1} + Z_{m-1,h})Z'_{m,h} \\
        & \qquad + (Z_{m} - Z_{m,h})(Z'_m - Z'_{m-1}) + Z_{m,h}(Z'_{m} - Z'_{m,h} - Z'_{m-1} + Z'_{m-1,h}).
    \end{align*}
    We have $ \abs{Z_{m} - Z_{m,h}} \le \norm{\Theta - \Theta_{h}}_{\infty} $ and $ \norm{Z'_{m} - Z'_{m,h}}_{\calY} \le \norm{\Theta' - \Theta'_{h}}_{\infty} $.
    From Theorem~\ref{LoScSt:thm:apostqmc}, 
    we know that $ Z_{m} = Z_{m-1} + \norm{\Theta}_{s,\alpha,\bsgamma}O(b^{-m})$ and 
    $ Z'_{m} = Z'_{m-1} + \norm{\Theta'}_{s,\alpha,\bsgamma}O(b^{-m}) $ as $ m \to \infty $, 
    with hidden constants in $ O(\cdot) $ independent of $ s,m,h $.
    Furthermore 
    $ Z_{m} - Z_{m,h} - Z_{m-1} + Z_{m-1,h} =  \norm{\Theta - \Theta_{h}}_{s,\alpha,\bsgamma} O(b^{-m}) $, and $ Z'_{m} - Z'_{m,h} - Z'_{m-1} + Z'_{m-1,h}  = \norm{\Theta' - \Theta'_{h}}_{s,\alpha,\bsgamma} O(b^{-m})$ also follow by Theorem~\ref{LoScSt:thm:apostqmc}.
    Therefore, we have
    \begin{equation*}
        \norm{\Delta_1}_{\calY} \le C b^{-m}(\norm{\Theta - \Theta_{h}}_{\infty} + \norm{\Theta' - \Theta'_{h}}_{\infty} + \norm{\Theta - \Theta_{h}}_{s,\alpha,\bsgamma} + \norm{\Theta' - \Theta'_{h}}_{s,\alpha,\bsgamma} ).
    \end{equation*}
    Next, we define 
    $T_1 := Z_{m-1,h} Z_{m,h}' - Z_{m,h}Z_{m-1,h}' $ 
    and obtain the estimate
    $ \norm{T_1}_{\calY} \le C (\norm{\Theta_h}_{s,\alpha,\bsgamma} + \norm{\Theta'_h}_{s,\alpha,\bsgamma}) b^{-m} $.
    Moreover,
    \begin{align*}
        \Delta_2 &:= (bZ_m - Z_{m-1})Z_{m} - (bZ_{m,h} - Z_{m-1,h})Z_{m,h}
        \\
        &\;= (b(Z_{m}- Z_{m,h}) + (Z_{m-1,h} - Z_{m-1}))Z_{m} + (bZ_{m,h} - Z_{m-1,h})(Z_{m} - Z_{m,h})
    \end{align*}
    gives $ \abs{\Delta_2} \le C \norm{\Theta - \Theta_{h}}_{\infty}$.
    Next, we observe that $ T_2 = (bZ_{m,h} - Z_{m-1,h})Z_{m,h} $ is bounded from below away from $ 0 $, for $ h $ sufficiently small.
    Therefore, for $ h $ sufficiently small we apply the elementary inequality
    \begin{equation*}
        \norm{\frac{T_1 + \Delta_1}{T_2 + \Delta_2} - \frac{T_1}{T_2}}_{\calY} \le \max(\norm{\Delta_1}_{\calY}, \norm{T_1\Delta_2}_{\calY}) \frac{ 1+ \abs{T_2}}{\abs{T_2}(\abs{T_2}- \abs{\Delta_2})},
    \end{equation*}
    valid for $ T_1, T_2,\Delta_1,\Delta_2 \in \RR $ with $ \abs{\Delta_2} < \abs{T_2}  $, which is satisfied since $ \abs{\Delta_2} \to 0 $ as $ h\to 0 $.
    Combining all these observations we obtain the claim.
\end{proof}

\begin{theorem}\label{LoScSt:thm:estcombined}
    For either the BIP or the OCP, assume that $ D \subseteq \RR^2 $  
    is a convex polygon and that the PDE \eqref{LoScSt:eq:PDE} satisfies 
    \eqref{LoScSt:eq:Param}, \eqref{LoScSt:eq:ellp}, $ f\in L^2(D) $ and $ \bsb'\in \ell^p(\NN), {p\in (0,1/2]} $, 
    with $ {b'}_j =  \norm{\psi_j}_{W^{1,\infty}(D)}$. 
    Let $ \calX_{h} = \mathbb{P}_1(\calT_h) \cap \calX $ for a family of shape-regular meshes $ \calT_h $ such that $ h = \max_{T\in \calT_h}h_T $.
    Then, we can construct polynomial lattices $ (P_m)_{m\in \NN} $ such that the estimator $ EST_{b^m,h} $ in \eqref{LoScSt:allest} satisfies 
    \begin{equation*}
        \norm{\frac{Z'}{Z} - \frac{Z_{m,h}'}{Z_{m,h}}}_{\calY} \le EST_{b^m,h} + O(b^{-2m+\eps} + b^{-m}h),
    \end{equation*}
     for any $ \eps > 0 $ as $ m\to \infty,\,h\to 0 $.
     The constant in $ O(\cdot) $ is independent of $ s,m$ and $h$, 
     but depends on $ \eps $.
\end{theorem}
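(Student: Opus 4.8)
The plan is to split the total error into three contributions, control each by a Proposition already established, and then check that the two non-estimated remainders are of the claimed asymptotic order. Inserting the intermediate quantities $Z'_m/Z_m$ and $E_{b^m}(\Theta'_h,\Theta_h)$, the triangle inequality gives
\[
    \norm{\tfrac{Z'}{Z} - \tfrac{Z'_{m,h}}{Z_{m,h}}}_{\calY}
    \le \norm{\tfrac{Z'}{Z} - \tfrac{Z'_{m}}{Z_{m}}}_{\calY}
    + \norm{\tfrac{Z'_{m}}{Z_{m}} - \tfrac{Z'_{m,h}}{Z_{m,h}}}_{\calY} ,
\]
and, for the first summand, $\norm{\tfrac{Z'}{Z} - \tfrac{Z'_m}{Z_m}}_{\calY} \le \norm{E_{b^m}(\Theta'_h,\Theta_h)}_{\calY} + \norm{E_{b^m}(\Theta',\Theta) - E_{b^m}(\Theta'_h,\Theta_h)}_{\calY} + O(b^{-2m+\eps})$, the last term by \RefProp{LoScSt:prop:bipQMCest}. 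The second summand above is bounded by the FEM ratio estimator appearing in \eqref{LoScSt:allest} via \RefProp{LoScSt:prop:bipFEMest}, and the $E_{b^m}(\Theta',\Theta)-E_{b^m}(\Theta'_h,\Theta_h)$ term via \RefProp{LoScSt:prop:extraerror}; together these already produce the leading part $EST_{b^m,h}$ of the bound.

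Next I would verify the hypotheses of these three Propositions under the assumptions of the theorem. For \RefProp{LoScSt:prop:bipQMCest} only \eqref{LoScSt:eq:Param} and \eqref{LoScSt:eq:ellp} are required. For \RefProp{LoScSt:prop:bipFEMest} the computable $\zeta_{m,h},\zeta'_{m,h}$ with $|Z_m-Z_{m,h}|\le\zeta_{m,h}$ and $\norm{Z'_m-Z'_{m,h}}_{\calY}\le\zeta'_{m,h}$ (so $c=1$) are obtained, for the BIP, by averaging the bounds of Lemmas~\ref{LoScSt:lem:bipFEMest1}--\ref{LoScSt:lem:bipFEMest2} over $P_m$ as in \eqref{LoScSt:estZ}--\eqref{LoScSt:estZprime}, and for the OCP by the analogous averages of Lemmas~\ref{LoScSt:lem:ocFEMest1}--\ref{LoScSt:lem:ocFEMest2}; these use the reliability bounds \eqref{LoScSt:reliability}/\eqref{LoScSt:reliabilityL2}, which hold since $a(\bsy)\in W^{1,\infty}(D)$ uniformly in $\bsy$ (a consequence of $\bsb'\in\ell^p(\NN)$), $f\in L^2(D)$, and $D$ is a convex polygon, and the required $\zeta_{m,h}\to0$ as $h\to0$ follows from $\eta_{\bsy,h}\to0$. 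For \RefProp{LoScSt:prop:extraerror} one needs the uniform SPOD bound \eqref{LoScSt:ctilde} for $\Theta,\Theta',\Theta_h,\Theta'_h$, which follows from the same parametric-regularity estimates that underlie Corollaries~\ref{LoScSt:coro:paramregbip}/\ref{LoScSt:coro:paramregoc}, applied in addition to the Galerkin solutions (differentiating \eqref{LoScSt:eq:PDE2} in $\bsy$ yields the same recursion and the same SPOD weights as for \eqref{LoScSt:eq:PDE}), together with $\norm{\Theta-\Theta_h}_{\infty}\to0$ from first-order FE convergence.

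It then remains to show $\norm{E_{b^m}(\Theta',\Theta)-E_{b^m}(\Theta'_h,\Theta_h)}_{\calY}=O(b^{-m}h)$. By \RefProp{LoScSt:prop:extraerror} it is bounded by $C b^{-m}(\norm{\Theta-\Theta_h}_{\infty}+\norm{\Theta'-\Theta'_h}_{\infty}+\norm{\Theta-\Theta_h}_{s,\alpha,\bsgamma}+\norm{\Theta'-\Theta'_h}_{s,\alpha,\bsgamma})$, so it suffices to show each of these four norms is $O(h)$ uniformly in $s$. For the sup-norms: on a convex polygon with $a(\bsy)\in W^{1,\infty}(D)$ and $f\in L^2(D)$ one has $H^2$-regularity of $u(\bsy)$ with bounds uniform in $\bsy$ (by uniform ellipticity), so C\'ea's lemma and piecewise-linear interpolation give $\sup_{\bsy\in U}\norm{u(\bsy)-u_h(\bsy)}_{\calX}=O(h)$, and likewise for the adjoint state in the OCP; since $\Theta$, $\Theta'$ (resp.\ $q$) depend on $u$ locally Lipschitz-continuously with constants bounded uniformly in $\bsy$ --- $\Theta\le1$, the misfit/residual entering the Lipschitz constant is controlled, and $G,\calO$ are bounded --- this transfers to $\norm{\Theta-\Theta_h}_{\infty},\norm{\Theta'-\Theta'_h}_{\infty}=O(h)$. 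For the weighted Sobolev norms: differentiating \eqref{LoScSt:eq:PDE} and \eqref{LoScSt:eq:PDE2} in $\bsy$ shows each mixed derivative $\partial_{\bsy}^{\bsnu}(u-u_h)$ solves a Galerkin problem whose data is built from lower-order derivatives, so by induction on $|\bsnu|$ it obeys $\sup_{\bsy}\norm{\partial_{\bsy}^{\bsnu}(u-u_h)(\bsy)}_{\calX}=O(h)$ with the same $\bsnu$-dependence as $\partial_{\bsy}^{\bsnu}u$; summing these bounds against the SPOD weights --- summable thanks to $\bsb'\in\ell^p(\NN)$ --- and using the stability of $\exp(\cdot)$ and of the bounded functionals under differentiation yields $\norm{\Theta-\Theta_h}_{s,\alpha,\bsgamma},\norm{\Theta'-\Theta'_h}_{s,\alpha,\bsgamma}=O(h)$ uniformly in $s$. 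Assembling everything gives $\norm{Z'/Z-Z'_{m,h}/Z_{m,h}}_{\calY}\le EST_{b^m,h}+O(b^{-2m+\eps}+b^{-m}h)$.

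The main obstacle is precisely this last step: proving the dimension-robust rate $O(h)$ for the FE error measured in the weighted Sobolev norm $\norm{\cdot}_{s,\alpha,\bsgamma}$, i.e.\ re-running the parametric-regularity analyses of \cite{LoScSt:SS14,LoScSt:DLS16} (BIP) resp.\ \cite{LoScSt:GKKSS21,LoScSt:GKKSS22} (OCP) for the \emph{discretization error} rather than for the solution itself. This needs $H^2$-type a priori bounds for the $\bsy$-derivatives of $u(\bsy)$ uniform in $\bsy$ (hence the coefficient-gradient summability $\bsb'\in\ell^p(\NN)$), an Aubin--Nitsche duality argument to obtain the sharp power of $h$ in the $L^2$-based quantities of the OCP, and care that Galerkin orthogonality is preserved under $\bsy$-differentiation. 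Once this dimension-uniform rate is in hand, the rest of the assembly via the triangle inequality and the elementary ratio estimates of Propositions~\ref{LoScSt:prop:bipQMCest}, \ref{LoScSt:prop:bipFEMest} and \ref{LoScSt:prop:extraerror} is routine bookkeeping.
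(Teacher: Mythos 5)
Your decomposition and bookkeeping coincide with the paper's: the same triangle-inequality split into the QMC ratio error (\RefProp{LoScSt:prop:bipQMCest}), the FEM ratio error (\RefProp{LoScSt:prop:bipFEMest} fed by the averaged estimators \RefEqTwo{LoScSt:estZ}{LoScSt:estZprime} resp.\ their OCP analogues), and the estimator-perturbation term (\RefProp{LoScSt:prop:extraerror}), with the same verification of hypotheses. The divergence --- and the one place where your argument is a program rather than a proof --- is the dimension-robust bound $\norm{\Theta-\Theta_h}_{s,\alpha,\bsgamma}+\norm{\Theta'-\Theta'_h}_{s,\alpha,\bsgamma}=O(h)$ needed to make \RefProp{LoScSt:prop:extraerror} deliver $O(b^{-m}h)$. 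You propose a real-variable induction on the mixed derivatives $\partial_{\bsy}^{\bsnu}(u-u_h)$, differentiating the continuous and Galerkin equations and tracking the $\bsnu$-dependence of the constants; you yourself flag this as the main obstacle, and indeed the delicate points (Galerkin orthogonality is not inherited by $\partial_{\bsy}^{\bsnu}u_h$ in a form that directly yields $O(h)$, the Aubin--Nitsche step for the $L^2$-based OCP quantities, and keeping the combinatorial growth in $\bsnu$ compatible with the SPOD weights) are exactly what is left unproven. The paper avoids this entirely by a complex-variable shortcut: the FE error $\bsy\mapsto(\Theta-\Theta_h)(\bsy)$ extends holomorphically to the complex neighbourhood $\frakT_{\bsb',\eps}$ of $U$, where it is bounded by $Ch$ uniformly (following \cite[Theorem 4.1]{LoScSt:DGLS17}), and then \cite[Theorem 3.1]{LoScSt:DLS16} converts this single sup-bound on the complex domain into the weighted-Sobolev-norm bound via Cauchy's integral formula, with all derivative bookkeeping done automatically and uniformly in $s$. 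Your route is in principle viable and more elementary in spirit, but as written it leaves the theorem's hardest ingredient unestablished; to complete the proof along the paper's lines you would replace that induction by the holomorphy argument, which is precisely why the hypothesis $\bsb'\in\ell^p(\NN)$ with ${b'}_j=\norm{\psi_j}_{W^{1,\infty}(D)}$ enters: it guarantees both the $H^2$-regularity underlying the $O(h)$ rate and the admissibility of the complex extension domain.
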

\begin{proof}
    Since $ b_j \le b'_j $, 
    from either Corollary \ref{LoScSt:coro:paramregbip} or \ref{LoScSt:coro:paramregoc}, 
    there exist SPOD weights $ {\bsgamma'} $ as in \eqref{LoScSt:def:spod} with $ {\bsbeta' \sim \bsb'} $, 
    such that $ \sup_{s}\norm{\Theta}_{s,\alpha,{\bsgamma'}} + \norm{\Theta'}_{s,\alpha,{\bsgamma'}} <\infty $.
    Combining \eqref{LoScSt:eq:residuals} with Lemma \ref{LoScSt:lem:bipFEMest1} 
    and $ h \to 0 $ we have $ \zeta_{m,h}\to 0 $ for the BIP case for any $ m\in \NN $.
    Similarly, combining \eqref{LoScSt:eq:residualsL2} with Lemma \ref{LoScSt:lem:ocFEMest1} yields the same observation for the OCP case.
    Therefore we can apply Propositions \ref{LoScSt:prop:bipQMCest} and \ref{LoScSt:prop:bipFEMest} to get that we can construct polynomial lattices so that as $ m\to \infty $
    \begin{equation*}
        \norm{\frac{Z'}{Z} - \frac{Z_{m,h}'}{Z_{m,h}}}_{\calY} \le \norm{E_{b^m}(\Theta',\Theta)}_{\calY} + \frac{Z_{m,h}\zeta_{m,h}' + \norm{Z_{m,h}'}_{\calY}\zeta_{m,h}}{Z_{m,h}^2 - \zeta_{m,h} Z_{m,h}} + O(b^{-2m+\eps}).
    \end{equation*}
    Next, we say that $ \bsrho \in (1,\infty)^{\NN}  $ is $ (\bsb',\eps)\text{-admissible} $ if $ \sum_{j \ge 1} (\rho_j - 1)b'_j \le \eps  $, 
    see \cite{LoScSt:DLS16}.
    Then we define $ \frakT_{\bsb',\eps} = \bigcup_{\bsrho, (\bsb',\eps)\text{-adm.} } 
                      \{\bsy \in \CC^s : \operatorname{dist}(y_j, [-\tfrac12,\tfrac12]) \le \rho_j-1 \} $.
    Following the computations in \cite[Theorem 4.1]{LoScSt:DGLS17}, 
   we obtain for $ h_0\in H $ and $ \eps>0 $ sufficiently small that $ \sup_{\bsy\in \frakT_{\bsb',\eps}}\abs{\Theta(\bsy)- \Theta_h(\bsy)} + \norm{\Theta'(\bsy)- \Theta_h'(\bsy)}_{\calY} \le Ch $
   holds for all $ h \le h_0, h\in H $.
    By \cite[Theorem 3.1]{LoScSt:DLS16}, this implies that for a constant $ C $ independent of $ h,s $,
    \begin{equation*}
        \norm{\Theta - \Theta_{h}}_{\infty} 
      + \norm{\Theta' - \Theta'_{h}}_{\infty} 
      + \norm{\Theta - \Theta_{h}}_{s,\alpha,{\bsgamma'}} 
      + \norm{\Theta' - \Theta'_{h}}_{s,\alpha,\bsgamma'} \le Ch.
    \end{equation*}
Thus, \eqref{LoScSt:ctilde} and 
$ \norm{\Theta - \Theta_{h}}_{\infty}\to 0 $ 
hold, and we apply Proposition \ref{LoScSt:prop:extraerror} to conclude.
\end{proof}

\section{{Numerical experiment}} \label{LoScSt:sec:numerics}

We consider the PDE~\eqref{LoScSt:eq:PDE} on the physical domain $D:=(0,1)^2$, 
with $f\equiv 10$, and parametric diffusion coefficient given by
\begin{equation*}	
	a(x,\bsy) = \frac{1}{2}+ \sum_{j = 1}^{16} \frac{y_j}{(k_{j,1}^2+k_{j,2}^2)^2}
	\sin(k_{j,1}x_1)\sin(k_{j,2}x_2).
\end{equation*}
The pairs $(k_{j,1}, k_{j,2})\in\N^2$ are defined by the ordering of $\N^2$ such that for $j\in\N$, $k_{j,1}^2+k_{j,2}^2\le k_{j+1,1}^2+k_{j+1,2}^2$, and the ordering is arbitrary when equality holds. 
We investigate a BIP with observation functional 
$\calO = (\calO_1, \dots, \calO_4)\in (L^2(D))^4$, given by 
$\calO_k(v) := \frac{1}{0.01}\int_{I_k} v dx$
for $v\in L^2(D)$ and $k=1,\dots, 4$, where $I_1:=[0.1, 0.2]^2, I_2:=[0.1, 0.2]\times [0.8,0.9], I_3:=[0.8, 0.9]\times [0.1,0.2], I_4:=[0.8,0.9]^2$. 
We draw a (random) sample of $a$ to compute the "ground truth" of observations $\calO(\calS(a))$ on a 
sequence of regular FE meshes of triangles obtained by uniform refinement,
with $525.313$ degrees of freedom (dofs). 
We add random noise 
$ \eta \sim \calN(0,\sigma^2\calI_4) $ to the observations, 
where $\sigma$ is set as $10\%$ of the arithmetic average of $\calO(\calS(a))\in \R^4$.
The realized synthetic data is given by $\delta = (0.5205, 0.5037, 0.5443, 0.4609)^\top$. 
A well-known issue is that the performance of ratio estimators deteriorates as $\sigma\to0$, 
    i.e. as $Z\to0$, and we refer to Section~\ref{LoScSt:sec:conclusion} below for a further discussion on this matter.

Our aim is to approximate $\EE^{\pi^{\delta}}[G(u)]$ by the ratio estimator $\frac{Z'_{m,h}}{Z_{m,h}}$, 
where $G\in L^2(D)$ is given by $G(v) := \frac{1}{0.5}\int_{[0.25, 0.75]^2} v dx$ for $v\in L^2(D)$.
The FE mesh and the polynomial lattice rule, that eventually determine $h$ and $m$, 
are refined successively based on the combined estimator in~\eqref{LoScSt:allest}.
For tolerances $\tau_{\text FEM}, \tau_{\text QMC}>0$, 
we start from an initial FE mesh of $D$, that is uniformly refined until the stopping criterion
$\frac{Z_{m,h}\zeta_{m,h}' + \norm{Z_{m,h}'}_{\calY}\zeta_{m,h}}{Z_{m,h}^2 - \zeta_{m,h} Z_{m,h}}\le \tau_{\text FEM}$
is met. 
Thereafter, 
we increase the number $b^m$ of lattice points until there holds $|E_{b^m}(\Theta'_{h},\Theta_{h})|\le \tau_{\text QMC}$.
We initialize by a FE mesh with $41$ dofs and $b^{m_0}$ QMC points with base $b=2$ and $m_0=2$, 
and set the tolerances to $\tau_{\text FEM}=\tau_{\text QMC}=2^{-6}$.
To assess the total realized error, we compute a reference solution $\frac{Z'_{\text{ref}}}{Z_{\text{ref}}}$ 
by a multilevel Monte Carlo ratio estimator, 
see \cite{LoScSt:SST17}, and report absolute error $\left|\frac{Z'_{m,h}}{Z_{m,h}}-\frac{Z'_{\text{ref}}}{Z_{\text{ref}}}\right|$. 
The reference estimator uses $6$ refinement levels with $545$/$525.313$ dofs 
on the coarsest/finest level, respectively, and uniform (pseudo-) random numbers $y$. 
The number of samples is adjusted to balance statistical error and discretization bias on each level.
The experiment has been implemented in \textsc{MATLAB} using the \textsc{MooAFEM} library \cite{LoScSt:IP22} 
for the FE discretization.
All arising linear systems are solved directly by the $\backslash$-operator in \textsc{MATLAB}.

The estimated and realized errors vs. the number of iterations (in the sense of refinement steps) 
are depicted Figure~\ref{LoScSt:fig:experiment}. 
Here, the FE a-posteriori estimator gives negative values on rather coarse meshes, 
where $c^*\tilde{\eta}_{\bsy,h}>1$. 
Therefore, we discarded these "pre-asymptotic" values in the plot. 
We see that the FE a-posteriori estimator from Proposition~\ref{LoScSt:prop:bipFEMest} 
is rather conservative at first, 
but eventually approaches the actual error for finer meshes. 
The QMC estimator $\abs{E_{b^m}(\Theta_h', \Theta_h')}$ is of the same magnitude as $\sigma$ at first, 
and only two more refinement steps are needed once the FE mesh is sufficiently fine. 
The combined error estimate $EST_{b^m,h}$ aligns well with the realized error, 
as expected from our theoretical analysis. 
\begin{figure}[htb]
	\centering
	\includegraphics[scale = 0.45]{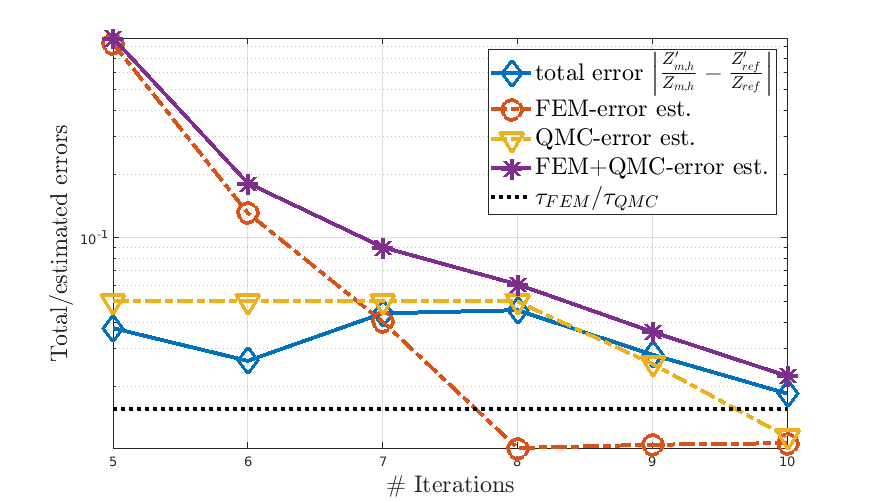}
	\caption{\small Results for the QMC-FEM ratio estimator with a-posterior ratio refinement. 
        First the FE mesh is refined until the tolerance $\tau_{\text FEM}$ is achieved (dashed w. circles),
        then the QMC a-posterior refinement takes place (dashed w. triangles). 
        The estimated error (solid w. stars) is conservative for coarse meshes,
        but eventually approaches the realized error (solid w. diamonds).}
	\label{LoScSt:fig:experiment}
\end{figure}

\section{Conclusion} \label{LoScSt:sec:conclusion}

    In this paper we outlined the construction of an a-posteriori QMC-FEM estimator,
    that allows to quantify the approximation error to a) posterior expectation in Bayesian inverse problems and b) the optimal control under the entropic risk measure.
    The estimator is computable and viable for large number of parameters $ s $ and it is asymptotically an upper bound for the errors in a) and b).
    Furthermore, the particular ratio structure $ \frac{Z'}{Z} $ of the sought quantities allows to tackle both
    the BIP and OCP, in a unified manner.
    In either case,
    we work under the assumption that the underlying model is a
    parametric elliptic PDE with affine-parametric diffusion.
    Nevertheless, the present QMC methodology to high-dimensional integration is applicable to 
    non-affine parametric PDEs with \emph{quantified, holomorphic-parametric dependence}, 
    see \cite{LoScSt:DLS16} and the references there.
    Since the error estimators 
    $ \eta_{\bsy,h},\tilde{\eta}_{\bsy,h},\dbtilde{\eta}_{\bsy,h} $
    are expressed as sums of local error contributions for $ T\in \calT_h $,
    a possible direction of research is to employ
    the presently proposed estimators $ \zeta_{\bsy,h}, \zeta'_{\bsy,h} $
    to steer an adaptive QMC-FEM algorithm \cite{LoScSt:Lo22}.
	
	The performance of ratio estimators in BIPs deteriorates in the 
        \emph{small noise limit} $\Gamma\downarrow 0$ resp. in \emph{large data regimes} with $K\gg 1$:
        the denominator $Z\to0$ in either case. 
	Nonrobustness of the ration estimators w.r. to $\Gamma$ 
        can not be overcome alone by the presented techniques.
	It may be alleviated by applying the present methodology following 
        suitable re-parametrization of the posterior, as explained e.g. in \cite{LoScSt:SS16}.
	{In view of \cite{LoScSt:DGLS17}, a-posteriori error bounds for multilevel ratio estimators are conceivable in the uniform elliptic setting of Subsection~\ref{LoScSt:sec:FwdPbm}. These extensions have not been included here for reasons of length, but will be subject to future research.
	
{
    
}
\end{document}